% SIAM Article Template
% \documentclass[review,final]{siamart}
\documentclass[12pt, final]{article}
     
% (fold)

% \RequirePackage{amsfonts, amsmath, amsthm} % AMS math environments (fold)
\RequirePackage{amsfonts, amsmath} % AMS math environments (fold)

% \RequirePackage{alltt, moreverb}           % improve verbatim environments (fold)
%                                            % (todo)
% % (end)
% \RequirePackage{showkeys, showidx}         % print labels, etc. in draft-mode (fold)
%                                            % (todo)
% % (end)
% \RequirePackage{exscale}                   % adapt scaling in non-10pt fonts (fold)
%                                            % (todo)
% % (end)
\RequirePackage{graphicx}                  % \includegraphics  -  jpg, png (partly pdf) (fold)
%                                            % (todo)
% % (end)
% \RequirePackage[utf8]{inputenc}            % write with Umlaute (fold)
%                                            % (todo)
% % (end)
\RequirePackage{algorithm, algorithmic}    % provides algorithm environment (fold)
                                           
% % (end)
% \RequirePackage{tikz}                      % provide graphics tikz (fold)
%                                            % (todo)
% % (end)
\RequirePackage{color, colortbl}           % color and colored table-background (fold)
%                                            % (todo)
% % (end)
% \RequirePackage{rotating}                  % rotating of elements (fold)
%                                            % \begin{sideways} \end{sideways}	Print the contents of the environment turned 90 degrees.
%                                            % \begin{turn}{30} \end{turn} Print the contents of the environment turned an arbitrary number of degrees.
%                                            % \begin{rotate}{30} \end{rotate} Print the contents of the environment turned an arbitraryvnumber of degrees (space for the rotated result isn't necessarily created).
% % (end)
% \RequirePackage{boxedminipage}             % generate bounding boxes (fold)
%                                            % (todo)
% % (end)
% \RequirePackage{upgreek, wasysym}          % additional symbols and characters (fold)
%                                            % (todo)
% % (end)
% % \RequirePackage{pdfsync}                   % synchronize with pdf (fold)
%                                            % (todo) (problems with SIAM style)
% % (end)
% \RequirePackage{subfigure}                 % generate subfigures (fold)
%                                            % (todo)
% % (end)
% \RequirePackage{units}                     % displays units (fold)
%                                            % Units: \unit[⟨val⟩]{⟨dim⟩}
%                                            % Fractions of units: \unitfrac[⟨val⟩]{⟨num⟩}{⟨denom⟩}
%                                            % Nice fractions: \nicefrac[⟨fontcmd⟩]{⟨num⟩}{⟨denom⟩}
% (end)
\RequirePackage{enumitem}

\usepackage{amsopn}
\RequirePackage{amsthm}
\RequirePackage{amsbsy}
\theoremstyle{plain} % {plain} {definition} {remark}
\newtheorem{theorem}{Theorem}[section]

\newtheorem{lemma}      [theorem]{Lemma}

\newtheorem*{corollary}{Corollary}

                            % redefine epsilon
                                % differential D
                              % tolerance
\newcommand{\e}   {\textnormal{e}}                         % Eulerian number
                   % closure
                       % imaginary part
                       % real part
                            % 1/2
\DeclareMathOperator*{\argmin}{arg\,min}                   % arg min
                         % imaginary unity
\renewcommand{\t} {^{\top}}                                % transpose, e.g. $A\t$
\renewcommand{\d} {{\rm d}}                                % differential d
                             % Hermitian
\newcommand{\F} {{\rm F}}                                  % Frobenius
                     % vectorize matrix
\newcommand{\norm} [2][]{\left\|#2\right\|_{#1}}           % f.e. \norm[\infty]{A-B}
             % f.e. \abs[\infty]{A-B}
 % f.e. \cond[\infty]{A}
               % brackets exponent
    % signum
\newcommand{\rank} [1]  {{\rm rank}\left( #1 \right)}    % rank of a matrix
\newcommand{\trace} [1]  {{\rm tr\!}\left( #1 \right)}    % rank of a matrix
\newcommand{\diag} [1]  {{\rm diag\!}\left( #1 \right)}    % diagonal of a matrix
    % span of a function
     % kernel of a function
                   % shorted underline
% \newcommand{\email}[1]  {\texttt{#1}}                      % email
                                % Kronecker product
   % acknowledgement
\newcommand{\fro} {\rm F}                                  % define Frobenius norm
\newcommand{\cov} [1]{{\rm Cov}\!\left[ #1 \right]}         % f.e. \cov{\bfA}
% (end)

% ---------------------------------------------------------------------------
% bold greek and latin characters and bold numbers (requires amsmath) (fold)

\newcommand{\bfGamma}{{\boldsymbol{\Gamma}}}

\newcommand{\bfLambda}{{\boldsymbol{\Lambda}}}

\newcommand{\bfSigma}{{\boldsymbol{\Sigma}}}

\newcommand{\bfPsi}{{\boldsymbol{\Psi}}}

\newcommand{\bfdelta}{{\boldsymbol{\delta}}}
\newcommand{\bfepsilon}{{\boldsymbol{\epsilon}}}

\newcommand{\bfmu}{{\boldsymbol{\mu}}}

\newcommand{\bfxi}{{\boldsymbol{\xi}}}

\newcommand{\bfA}{{\bf A}}
\newcommand{\bfB}{{\bf B}}
\newcommand{\bfC}{{\bf C}}
\newcommand{\bfD}{{\bf D}}

\newcommand{\bfF}{{\bf F}}
\newcommand{\bfG}{{\bf G}}
\newcommand{\bfH}{{\bf H}}
\newcommand{\bfI}{{\bf I}}

\newcommand{\bfK}{{\bf K}}
\newcommand{\bfL}{{\bf L}}
\newcommand{\bfM}{{\bf M}}

\newcommand{\bfP}{{\bf P}}
\newcommand{\bfQ}{{\bf Q}}
\newcommand{\bfR}{{\bf R}}
\newcommand{\bfS}{{\bf S}}

\newcommand{\bfU}{{\bf U}}
\newcommand{\bfV}{{\bf V}}
\newcommand{\bfW}{{\bf W}}
\newcommand{\bfX}{{\bf X}}
\newcommand{\bfY}{{\bf Y}}
\newcommand{\bfZ}{{\bf Z}}

\newcommand{\bfb}{{\bf b}}

\newcommand{\bfe}{{\bf e}}

\newcommand{\bfq}{{\bf q}}
\newcommand{\bfr}{{\bf r}}

\newcommand{\bfu}{{\bf u}}
\newcommand{\bfv}{{\bf v}}
\newcommand{\bfw}{{\bf w}}
\newcommand{\bfx}{{\bf x}}
\newcommand{\bfy}{{\bf y}}

\newcommand{\bfzero}{{\bf0}}
\newcommand{\bfone}{{\bf1}}

% (end)

% ---------------------------------------------------------------------------
% caligraphic characters (fold)

\newcommand{\calN}{\mathcal{N}}

% (end)

% ---------------------------------------------------------------------------
% blackboard characters (requires amsfont) (fold)

\newcommand{\bbE}{\mathbb{E}}

\newcommand{\bbR}{\mathbb{R}}

% (end)

% ---------------------------------------------------------------------------
% mathematical fracture characters (requires amsmath) (fold)

% (end)

\newcommand{\rel}{{\rm rel}}                     % vectorize matrix

% (end)
 
% Information that is shared between the article and the supplement
% (title and author information, macros, packages, etc.) goes into
% ex_shared.tex. If there is no supplement, this file can be included
% directly.

% load packages
% \usepackage{multirow}
% \usepackage{epstopdf}
% \usepackage{amsopn}
% \usepackage{hyperref}
% \usepackage{tiaStyle_arXiv}
\usepackage{fullpage}
\usepackage{marvosym}
% \usepackage[backgroundcolor = light-gray, linecolor = gray,textsize = footnotesize]{todonotes}%\reversemarginpar
% \newcommandx{\info}[2][1=]{\todo[linecolor=OliveGreen,backgroundcolor=OliveGreen!25,bordercolor=OliveGreen,#1]{#2}}

% \newcommand{\tia}[1]  {{\color{blue} #1}}
% \newcommand{\jc}[1]  {{\color{red} #1}}
% \definecolor{light-gray}{gray}{0.95}
% \numberwithin{theorem}{section}
% \numberwithin{algorithm}{section}

% \ifpdf
%   \DeclareGraphicsExtensions{.eps,.pdf,.png,.jpg}
% \else
  % \DeclareGraphicsExtensions{.eps}
% \fi

% Declare title and authors, without \thanks
% \newcommand{\TheTitle}{Optimal regularized inverse matrices as preconditioners for inverse problems}
\newcommand{\TheTitle}{Optimal regularized inverse matrices\\  for \\ inverse problems}

% Sets running headers as well as PDF title and authors
% \headers{Optimal regularized inverse matrices for inverse problems}{\TheAuthors}

% Title. If the supplement option is on, then "Supplementary Material"
% is automatically inserted before the title.
% \title{{\TheTitle}\thanks{This work was funded by TODO}}
\title{{\TheTitle}}

% Authors: full names plus addresses.
% \author{
%   Julianne Chung\thanks{Department of Mathematics, Virginia Tech, Blacksburg, VA
%     (\email{jmchung@vt.edu}, \url{http://www.math.vt.edu/people/jmchung/}).}
%   \and
%   Matthias Chung\thanks{Department of Mathematics, Virginia Tech, Blacksburg, VA
%     (\email{mcchung@vt.edu}, \url{http://www.math.vt.edu/people/mcchung/}).}
% }

\author{
  Julianne Chung
	\thanks{Department of Mathematics, Virginia Tech, Blacksburg, VA \newline \hspace*{10ex}
    \Letter \ \texttt{\href{mailto:jmchung@vt.edu}{jmchung@vt.edu} \ \ \Mundus \ \href{http://www.math.vt.edu/people/jmchung/}{www.math.vt.edu/people/jmchung/}}}
  \and
  Matthias Chung
	\thanks{Department of Mathematics, Virginia Tech, Blacksburg, VA \newline \hspace*{10ex}
    \Letter \ \texttt{\href{mailto:mcchung@vt.edu}{mcchung@vt.edu} \ \ \Mundus \ \href{http://www.math.vt.edu/people/mcchung/}{www.math.vt.edu/people/mcchung/}}}
}

% Optional PDF information
% \ifpdf
% \hypersetup{
  % pdftitle={\TheTitle},
  % pdfauthor={\TheAuthors}
% }
% \fi

% The next statement enables references to information in the
% supplement. See the xr-hyperref package for details.

% \externaldocument{ex_supplement}

% FundRef data to be entered by SIAM
%<funding-group>
%<award-group>
%<funding-source>
%<named-content content-type="funder-name"> 
%</named-content> 
%<named-content content-type="funder-identifier"> 
%</named-content>
%</funding-source>
%<award-id> </award-id>
%</award-group>
%</funding-group>

% (end)

\begin{document}

\maketitle

% REQUIRED
\begin{abstract}
In this paper, we consider optimal low-rank regularized inverse matrix approximations and their applications to inverse problems.  We give an explicit solution to a generalized rank-constrained regularized inverse approximation problem, where the key novelties are that we allow for updates to existing approximations and we can incorporate additional probability distribution information. Since computing optimal regularized inverse matrices under rank constraints can be challenging, especially for problems where matrices are large and sparse or are only accessable via function call, we propose an efficient rank-update approach that decomposes the problem into a sequence of smaller rank problems.  Using examples from image deblurring, we demonstrate that more accurate solutions to inverse problems can be achieved by using rank-updates to existing regularized inverse approximations. Furthermore, we show the potential benefits of using optimal regularized inverse matrix updates for solving perturbed tomographic reconstruction problems.
\end{abstract}

% REQUIRED
\noindent{\it Keywords}: 
  ill-posed inverse problems, low-rank matrix approximation, regularization, Bayes risk

% REQUIRED
\noindent{\it AMS}: 
 	65F22, 15A09, 15A29

\section{Introduction} \label{sec:introduction} % (fold)
Optimal low-rank inverse approximations play a critical role in many scientific applications such as matrix completion, machine learning, and data analysis \cite{Ye2005,Drineas2007,Markovsky2012}.  Recent theoretical and computational developments on \emph{regularized} low-rank inverse matrices have enabled new applications, such as for solving inverse problems \cite{Chung2015}. In this paper, we develop theoretical results for a general case for finding \emph{optimal regularized inverse matrices} (ORIMs), and we propose novel uses of these matrices for solving 
linear ill-posed inverse problems of the form,
\begin{equation}
	\label{eqn:linearsystem}
	\bfb = \bfA \bfxi +\bfdelta,
\end{equation}
where $\bfxi \in \bbR^n$ is the desired solution, $\bfA \in \bbR^{m \times n}$ models the forward process, $\bfdelta \in \bbR^m$ is additive noise, and $\bfb \in \bbR^m$ is the observed data.  We assume that $\bfA$ is very large and sparse, or that $\bfA$ cannot be formed explicitly, but matrix vector multiplications with $\bfA$ are feasible (e.g., $\bfA$ can be an object or function handle).  Furthermore, we are interested in ill-posed inverse problems, whereby small errors in the data may result in large errors in the solution \cite{Hadamard1923,Hansen2010,Vogel1987}, and regularization is needed to stabilize the solution.

Next, we provide a brief introduction to regularization and ORIMs, followed by a summary of the main contributions of this work.
 Various forms of regularization have been proposed in the literature, including variational methods \cite{RuOsFa92,tikhonov1977solutions} and iterative regularization, where early termination of an iterative methods provides a regularized solution \cite{HaHa93,Hank95a}.
Optimal regularized inverse matrices have been proposed for solving inverse problems and have been studied in both the Bayes and empirical Bayes framework \cite{Chung2011,Chung2013a,Chung2015}. 
Let $\bfP \in \bbR^{n \times m}$ be an initial approximation matrix (e.g., $\bfP=\bfzero_{n \times m}$ in previous works). 
Then treating $\bfxi$ and $\bfdelta$ as random variables, the goal is to find a matrix $\widehat\bfZ \in \bbR^{n \times m}$ that gives a small reconstruction error.  That is, $\rho((\bfP+\widehat\bfZ) \bfb - \bfxi)$ should be small for some given error measure $\rho:\bbR^n \to \bbR^+_0$.  
% Particular choices of $\rho$ and distributions of $\bfxi$ and $\bfdelta$ determine the regularization matrix $\widehat\bfZ$.
In this paper, we consider $\rho$ to be the squared Euclidean norm, and we seek an \emph{optimal} matrix $\widehat\bfZ$ that minimizes the expected value of the errors with respect to the joint distribution of $\bfxi$ and $\bfdelta$.  Hence, the problem of finding an ORIM $\widehat\bfZ$ can be formulated as 
\begin{equation}  
	\label{eqn:Bayesmin}
\widehat\bfZ = \argmin_{\bfZ} \ \bbE\, \norm[2]{((\bfP+ \bfZ) \bfA - \bfI_n) \bfxi + \bfZ \bfdelta}^2\,.
\end{equation} 
This problem is often referred to as a \emph{Bayes risk minimization problem} \cite{Carlin2000,Vapnik1998}. 
%but as we will see in Section~\ref{sec:background}, it is equivalent to a matrix approximation problem.
Especially for large scale problems, it may be advisable to include further constraints on $\bfZ$ such as sparsity, symmetry, block or cyclic structure, or low-rank structure. Here, we will focus on matrices $\bfZ$ of low-rank. Once computed, ORIM $\widehat\bfZ$ has mainly been used to efficiently solve linear inverse problems in an online phase as data $\bfb$ becomes available and requires therefore only a matrix-vector multiplication $(\bfP +\widehat\bfZ) \bfb$.  

% subsection regularization_for_linear_inverse_problems (end)

% \subsection{Overview of our contributions} % (fold)
% \label{sub:overview_of_our_contributions}
\paragraph{Overview of our contributions}
First, we derive a closed-form solution for problem~\eqref{eqn:Bayesmin} under rank constraints with uniqueness conditions.  The two key novelties are that we include matrix $\bfP$, thereby allowing for updates to existing regularized inverse matrices, and we incorporate additional information regarding the distribution of $\bfxi$.  More specifically, we allow non-zero mean for the distribution of $\bfxi$ and show that our results reduce to previous results in \cite{Chung2015} that assume zero mean and $\bfP=\bfzero_{n \times m}$.  These extension are not trivial and require a different approach than \cite{Chung2015} for the proof.  Second, we describe an efficient rank-update approach for computing a global minimizer of~\eqref{eqn:Bayesmin} under rank constraints, that is related to but different than the approach described in \cite{chung2014efficient} where training data was used as a substitute for knowledge of the forward model. We demonstrate the efficiency and accuracy of the rank-update approach, compared to standard SVD-based methods, for solving a sequence of ill-posed problems.

Third, we propose novel uses of ORIM updates in the context of solving inverse problems.  An example from image deblurring demonstrates that updates to existing regularized inverse matrix approximations such as the Tikhonov reconstruction matrix can lead to more accurate solutions.  
Also, we use an example from tomographic image reconstruction to show that ORIM updates can be used to efficiently and accurately solve perturbed inverse problems.  This contribution has significant implications for further research development, ranging from use within nonlinear optimization schemes to preconditioner updates.

The key benefits of using ORIMs for solution updates and for solving inverse problems are that (1) we approximate the regularized inverse directly, so reconstruction or application requires only a matrix-vector multiplication rather than a linear solve; (2) our matrix inherently incorporates regularization; (3) ORIMs and ORIM updates can be computed for any general rectangular matrix $\bfA$, even if $\bfA$ is only available via a function call, making it ideal for large-scale problems. 

The paper is organized as follows.  In Section~\ref{sec:background}, we provide preliminaries to establish notation and summarize important results from the literature. Then, in Section~\ref{sec:proof_general_case}, we derive a closed form solution to problem~\eqref{eqn:Bayesmin} under rank constraints and provide uniqueness conditions (see Theorem~\ref{thm:mainresult} for the main result).
For large-scale problems, computing an ORIM according to Theorem~\ref{thm:mainresult} may be computationally prohibitive, so in Section~\ref{sub:computational_methods_for_obtaining_bfz}, we describe a rank-update approach for efficient computation.
Finally, in Section~\ref{sec:numerics} we provide numerical examples from image processing that demonstrate the benefits of ORIM updates.
 Conclusions and discussions are provided in Section~\ref{sec:conclusions}.

% subsection overview_of_our_contributions (end)

\section{Background} % (fold)
\label{sec:background}
In this section, we begin with preliminaries to establish notation.

Given a matrix $\bfA\in\bbR^{m\times n}$ with rank $k \leq \min(m,n)$, let $\bfA= \bfU_\bfA \bfSigma_\bfA \bfV_\bfA\t$ denote the singular value decomposition (SVD) of $\bfA$, where $\bfU_\bfA = [\bfu_1,\ldots,\bfu_m] \in \bbR^{m \times m}$ and $\bfV_\bfA = [\bfv_1,\ldots,\bfv_n] \in \bbR^{n \times n}$ are orthogonal matrices that contain the left and right singular vectors of $\bfA$, respectively. Diagonal matrix $\bfSigma_\bfA = \diag{\sigma_1(\bfA),\ldots,\sigma_k(\bfA), 0,\ldots,0}\in \bbR^{m \times n}$ contains the singular values $\sigma_1(\bfA)\geq \cdots \geq \sigma_k(\bfA) > 0$ and zeros on its main diagonal. The truncated SVD approximation of rank $r\leq k$ of $\bfA$ is denoted by $\bfA_r = \bfU_{\bfA,r} \bfSigma_{\bfA, r} \bfV_{\bfA,r}\t \in \bbR^{m\times n}$ where $\bfU_{\bfA,r}$ and $\bfV_{\bfA,r}$ contain the first $r$ vectors of $\bfU_{\bfA}$ and $\bfV_{\bfA}$ respectively, and $\bfSigma_{\bfA,r}$ is the principal $r \times r$ submatrix of $\bfSigma_\bfA$.  The TSVD approximation is unique if and only if $\sigma_r(\bfA)>\sigma_{r+1}(\bfA)$.  Furthermore, the Moore-Penrose pseudoinverse of $\bfA$ is given by $\bfA^\dagger = \bfV_{\bfA,k} \bfSigma_{\bfA,k}^{-1} \bfU_{\bfA,k}\t$.
% , and we denote $\bfR_\bfA = \bfV_{\bfA,k} \bfV_{\bfA,k}\t \in \bbR^{n \times n}$ to be the sum of outer products.

Next we show that the problem of finding an \emph{optimal regularized inverse matrix} (ORIM) (i.e., a solution to~\eqref{eqn:Bayesmin}) is equivalent to solving a matrix approximation problem.  That is, assuming $\bfxi$ and $\bfdelta$ are random variables, the goal is to find a matrix $\bfZ$ such that we minimize the expected value of the squared 2-norm error, i.e., $\min_\bfZ f(\bfZ)$, where
\begin{equation*}
	f(\bfZ) =  \bbE\, \norm[2]{(\bfP+\bfZ)\bfb -\bfxi}^2 = \bbE\, \norm[2]{(\bfP+\bfZ)(\bfA\bfxi+\bfdelta) -\bfxi}^2
\end{equation*}
is often referred to as the \emph{Bayes risk}. 

Lets further assume that $\bfxi$ and $\bfdelta$ are independent random variables with $\bbE[\bfxi] = \bfmu_\bfxi$, the covariance matrix $\cov{\bfxi} = \bfGamma_\bfxi$ is symmetric positive definite, $\bbE[\bfdelta] = \bfzero_{m\times 1}$, and $\cov{\bfdelta} = \eta^2\bfI_m$.
First, due to the independence of $\bfxi$ and $\bfdelta$ and since $\bbE[\bfdelta] = \bfzero_{m\times 1}$, we can rewrite the Bayes risk as
$$ f(\bfZ) = \bbE\, \left[\norm[2]{((\bfP+\bfZ)\bfA-\bfI_n)\bfxi}^2\right] + \bbE\, \left[\norm[2]{(\bfP+\bfZ)\bfdelta}^2\right].$$
Then using the property of the quadratic form \cite{Seber2012}, $\bbE \left[\bfepsilon\t \bfLambda \bfepsilon\right] = \trace{\bfLambda\bfSigma_{\bfepsilon}} + \bfmu_{\bfepsilon}\t \bfLambda \bfmu_{\bfepsilon}$, 
where $\rm tr(\cdot)$ denotes the trace, $\bfLambda$ is symmetric, $\bbE[\bfepsilon] = \bfmu_\bfepsilon$ and $\cov{\bfepsilon} = \bfSigma_\bfepsilon$,
\begin{align*}
	f(\bfZ) =& \, \bfmu_\bfxi\t((\bfP+\bfZ)\bfA -\bfI_n)\t((\bfP+\bfZ)\bfA -\bfI_n)\bfmu_\bfxi \\
	&+ \trace{((\bfP+\bfZ)\bfA -\bfI_n)\t((\bfP+\bfZ)\bfA -\bfI_n)\bfM_\bfxi\bfM_\bfxi\t} +  \eta^2\, \trace{(\bfP+\bfZ)\t(\bfP+\bfZ)}
\end{align*}
with $\bfM_\bfxi\bfM_\bfxi\t = \bfGamma_\bfxi$ being any symmetric factorization, e.g., Cholesky factorization. Using the cyclic property of the trace leads to
$$
f(\bfZ) = \norm[2]{((\bfP+\bfZ)\bfA -\bfI_n)\bfmu_\bfxi}^2 + \norm[\fro]{((\bfP+\bfZ)\bfA -\bfI_n)\bfM_\bfxi}^2 + \eta^2\norm[\fro]{(\bfP+\bfZ)}^2, 
$$
where $\norm[\fro]{\,\cdot\,}$ denotes the Frobenius norm.  Next we rewrite $f(\bfZ)$ in terms of only one Frobenius norm. Let $\bfM = \begin{bmatrix} \bfM_\bfxi & \bfmu_\bfxi\end{bmatrix} \in \bbR^{n \times (n+1)}$, then using the identities of the Frobenius and the vector 2-norm, as well as applying Kronecker product properties, we get
\begin{equation}\label{eq:reformulatedFcn}
f(\bfZ) = \norm[\fro]{ \bfZ \begin{bmatrix}
	\bfA \bfM & \eta \bfI_m \end{bmatrix} - \begin{bmatrix}
		\bfM - \bfP \bfA \bfM & -\eta \bfP
	\end{bmatrix}}^2.	
\end{equation}
 
Thus, minimizing the Bayes risk in problem~\eqref{eqn:Bayesmin} is equivalent to minimizing~\eqref{eq:reformulatedFcn}.
Notice that so far we have not imposed any constraints on $\bfZ$.  Although various constraints can be imposed on $\bfZ$, here we consider $\bfZ$ to be of low-rank, i.e., $\rank{\bfZ}\leq r$ for some $r \leq \rank{\bfA}$. Hence the low-rank matrix approximation problem of interest in this paper is
\begin{equation}
	\label{eqn:lrproblem}
	\min_{\rank{\bfZ} \leq r} \,\,f(\bfZ) = \norm[\fro]{ \bfZ \begin{bmatrix}
	\bfA \bfM & \eta \bfI_m \end{bmatrix} - \begin{bmatrix}
		\bfM - \bfP \bfA \bfM & -\eta \bfP
	\end{bmatrix}}^2.
\end{equation}
We will provide a closed form solution for~\eqref{eqn:lrproblem} in Section~\ref{sec:proof_general_case}, but it is important to remark that special cases of this problem have been previously studied in the literature.  For example, a solution for the case where $\bfP=\bfzero_{n \times m}$ and $\bfmu_\bfxi = \bfzero_{n\times 1}$ was provided in \cite{Chung2015} that uses the generalized SVD of $\left\{\bfA, \bfM_\bfxi^{-1}\right\}$.
If, in addition, we assume $\bfM_\bfxi = \bfI_n,$ then an optimal regularized inverse matrix of at most rank $r$ reduces to a truncated-Tikhonov matrix \cite{Chung2015},
\begin{equation}
	\label{eqn:TTik}
	\widehat \bfZ = \bfV_{\bfA,r} \bfPsi_{\bfA,r} \bfU_{\bfA,r}\t,
\end{equation}
where $\bfPsi_{\bfA,r} = \diag{\frac{\sigma_1(\bfA)}{\sigma_1^2(\bfA)+ \eta^2}, \ldots, \frac{\sigma_r(\bfA)}{\sigma_r^2 (\bfA)+ \eta^2}}$. Moreover, this $\widehat \bfZ$ is the {\em unique} global minimizer for 
\begin{equation} \label{eq:objFcn}
\min_{\rank{\bfZ} \leq r} \ % f(\bfZ) =
\norm[\fro]{\bfZ \bfA - \bfI_n}^2 + \eta^2  \norm[\fro]{\bfZ}^2,
\end{equation}
 if and only if $\sigma_r(\bfA) > \sigma_{r+1}(\bfA)$.

% section background (end)

\section{Low-rank optimization problem}  \label{sec:proof_general_case} % (fold)
The goal of this section is to derive the unique global minimizer for problem~\eqref{eqn:lrproblem}, under suitable conditions.  We actually consider a more general problem, as stated in Theorem~\ref{thm:mainresult}, where $\bfM \in \bbR^{n \times p}$ with $\rank{\bfM} = n \leq p.$
Our proof uses a special case of Theorem~2.1 from Friedland \& Torokhti \cite{Friedland2007} that is provided here for completeness.

\begin{theorem}\label{thm:Friedland} Let matrices $\bfB \in \bbR^{m\times n}$ and $\bfC \in \bbR^{q \times n}$ with $k = \rank{\bfC}$ be given. Then
	$$ \widehat \bfZ  = \left(\bfB \bfV_{\bfC,k}\bfV_{\bfC,k}\t \right)_r \bfC^\dagger$$
	is a solution to the minimization problem
	$$ \min_{\rank{\bfZ} \leq r}  \norm[\F]{\bfZ\bfC- \bfB}^2,$$
	having a minimal $\norm[\F]{\bfZ}$. This solution is unique if and only if either
	$$ r \geq \rank{\bfB\bfV_{\bfC,k}\bfV_{\bfC,k}\t} $$
	or
	$$ 1 \leq r < \rank{\bfB\bfV_{\bfC,k}\bfV_{\bfC,k}\t} \quad \mbox{and} \quad \sigma_r(\bfB\bfV_{\bfC,k}\bfV_{\bfC,k}\t) > \sigma_{r+1}(\bfB\bfV_{\bfC,k}\bfV_{\bfC,k}\t).$$
\end{theorem}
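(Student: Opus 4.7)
The plan is to diagonalize $\bfC$ via its SVD to turn the problem into a classical Eckart--Young--Mirsky approximation. I write $\bfC = \bfU_\bfC\bfSigma_\bfC\bfV_\bfC\t$ with $k=\rank{\bfC}$ and split $\bfV_\bfC = [\bfV_{\bfC,k}\ \bfV_{\bfC,k}^\perp]$ so that the columns of $\bfV_{\bfC,k}^\perp$ span $\ker{\bfC}$. By orthogonal invariance of the Frobenius norm,
\[
\norm[\F]{\bfZ\bfC - \bfB}^2 = \norm[\F]{\bfZ\bfU_\bfC\bfSigma_\bfC - \bfB\bfV_\bfC}^2.
\]
Setting $\bfW = \bfZ\bfU_\bfC$ and partitioning $\bfW = [\bfW_1\ \bfW_2]$ with $\bfW_1\in\bbR^{m\times k}$, the fact that the trailing $q-k$ singular values of $\bfC$ vanish makes the objective split as
\[
\norm[\F]{\bfW_1\bfSigma_{\bfC,k} - \bfB\bfV_{\bfC,k}}^2 + \norm[\F]{\bfB\bfV_{\bfC,k}^\perp}^2,
\]
so the second term is a constant and $\bfW_2$ has dropped out of the objective entirely.

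Next, I would substitute $\widetilde\bfW = \bfW_1\bfSigma_{\bfC,k}$, which preserves rank because $\bfSigma_{\bfC,k}$ is invertible, reducing the problem to
\[
\min_{\rank{\widetilde\bfW}\leq r}\norm[\F]{\widetilde\bfW - \bfB\bfV_{\bfC,k}}^2.
\]
Eckart--Young--Mirsky gives the minimizer $\widetilde\bfW = (\bfB\bfV_{\bfC,k})_r$, hence $\bfW_1 = (\bfB\bfV_{\bfC,k})_r\bfSigma_{\bfC,k}^{-1}$. Minimality of $\norm[\F]{\bfZ} = \norm[\F]{\bfW}$ then forces $\bfW_2=\bfzero$ by Pythagoras, yielding
\[
\widehat\bfZ = \bfW_1\bfU_{\bfC,k}\t = (\bfB\bfV_{\bfC,k})_r\bfSigma_{\bfC,k}^{-1}\bfU_{\bfC,k}\t.
\]
Using orthonormality of the columns of $\bfV_{\bfC,k}$ together with the identities $\bfC^\dagger = \bfV_{\bfC,k}\bfSigma_{\bfC,k}^{-1}\bfU_{\bfC,k}\t$ and $(\bfB\bfV_{\bfC,k}\bfV_{\bfC,k}\t)_r = (\bfB\bfV_{\bfC,k})_r\bfV_{\bfC,k}\t$ then recovers the claimed closed form $\widehat\bfZ = (\bfB\bfV_{\bfC,k}\bfV_{\bfC,k}\t)_r\bfC^\dagger$.

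The main obstacle will be the uniqueness characterization. After the reduction, uniqueness of $\widehat\bfZ$ among minimum-Frobenius-norm minimizers is equivalent to uniqueness of the rank-$r$ truncated SVD of $\bfB\bfV_{\bfC,k}$; by Eckart--Young--Mirsky this holds exactly when $r \geq \rank{\bfB\bfV_{\bfC,k}}$ or $\sigma_r(\bfB\bfV_{\bfC,k}) > \sigma_{r+1}(\bfB\bfV_{\bfC,k})$. Since the orthonormal columns of $\bfV_{\bfC,k}$ ensure that $\bfB\bfV_{\bfC,k}$ and $\bfB\bfV_{\bfC,k}\bfV_{\bfC,k}\t$ share the same nonzero singular values, and hence the same rank, these conditions coincide with those stated in the theorem. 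The delicate part is the boundary case $\sigma_r = \sigma_{r+1}$: distinct truncated SVDs have identical Frobenius norm, so the minimum-norm requirement cannot disambiguate them, which is precisely why uniqueness must fail in that regime.
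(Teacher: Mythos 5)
The paper does not actually prove this statement: it is quoted verbatim as Theorem~2.1 of Friedland and Torokhti, and the ``proof'' in the paper is the single line ``See \cite{Friedland2007}.'' So there is nothing in the paper to compare your argument against line by line; what you have written is a self-contained derivation of the imported result. Your route --- rotate by $\bfV_\bfC$ and $\bfU_\bfC$, observe that the block $\bfW_2$ of $\bfW=\bfZ\bfU_\bfC$ corresponding to $\ker{\bfC}$ drops out of the objective, absorb the invertible $\bfSigma_{\bfC,k}$ into the unknown, and invoke Eckart--Young--Mirsky --- is the standard proof of this theorem and is essentially what Friedland and Torokhti do. The reduction is correct: the splitting of the objective, the rank bookkeeping ($\rank{\bfW_1}=\rank{\widetilde\bfW}$ since $\bfSigma_{\bfC,k}$ is invertible, and $\rank{\bfW}\geq\rank{\bfW_1}$ with equality attainable by $\bfW_2=\bfzero$), and the identity $(\bfB\bfV_{\bfC,k}\bfV_{\bfC,k}\t)_r=(\bfB\bfV_{\bfC,k})_r\bfV_{\bfC,k}\t$ all check out, and they recover the stated closed form.

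One step deserves more care: the minimal-norm and uniqueness claims. You have $\norm[\F]{\bfZ}=\norm[\F]{\bfW}$ and, after setting $\bfW_2=\bfzero$, $\norm[\F]{\bfZ}=\norm[\F]{\bfW_1}=\norm[\F]{\widetilde\bfW\bfSigma_{\bfC,k}^{-1}}$, which is \emph{not} the same as $\norm[\F]{\widetilde\bfW}$. So your closing remark --- ``distinct truncated SVDs have identical Frobenius norm, so the minimum-norm requirement cannot disambiguate them'' --- applies to $\widetilde\bfW$ but does not transfer automatically to $\bfZ$: when $\sigma_r=\sigma_{r+1}$ the set of best rank-$r$ approximations $\widetilde\bfW$ is a continuum on which $\norm[\F]{\widetilde\bfW}$ is constant but $\norm[\F]{\widetilde\bfW\bfSigma_{\bfC,k}^{-1}}$ generally is not, so in principle the minimum-norm requirement could single out a solution, and conversely the SVD truncation is not obviously the one minimizing that weighted norm. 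To close the ``only if'' direction you would need to exhibit two distinct minimizers of equal minimal norm (e.g.\ by a rotation in the singular subspace associated with the repeated value), and to justify ``having a minimal $\norm[\F]{\bfZ}$'' in full generality you would need to argue about the weighted norm over the set of best approximations rather than the unweighted one. This is a genuine, if localized, gap; the forward direction (the stated conditions imply uniqueness) and the closed-form expression for $\widehat\bfZ$ are fine as you have them.
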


% Proof for Friedland and Torokhti
\begin{proof}
	See \cite{Friedland2007}.
	\end{proof}
To get to our main result we first provide the following Lemma.
\begin{lemma}\label{lem:svdExtended}
Let $\bfB = [\bfA  \ \  \eta \, \bfI_m]$ with $\bfA \in \bbR^{m\times n}$ and parameter $\eta \geq 0$, nonzero if $\rank{\bfA} < \max\{m,n\}$. Let further $\bfD_\bfA\in\bbR^{m \times m}$ with 
$\bfD_\bfA = \diag{\sqrt{\sigma_1^2(\bfA)+\eta^2},\ldots, \sqrt{\sigma_n^2(\bfA)+\eta^2}, \eta, \ldots, \eta}$  for  $m\geq n$ and 
$\bfD_\bfA = \diag{\sqrt{\sigma_1^2(\bfA)+\eta^2},\ldots, \sqrt{\sigma_m^2(\bfA)+\eta^2}}$ for $m < n$.
Then the SVD of $\bfB$ is given by $\bfB= \bfU_\bfB \bfSigma_\bfB \bfV_\bfB\t$, where 
$$\bfU_\bfB = \bfU_\bfA, \quad \bfSigma_\bfB = \left[ \bfD_\bfA \ \ \bfzero_{m \times n} \right] \quad \mbox{and} \quad \bfV_\bfB = \begin{bmatrix}
	\bfV_\bfA \bfSigma_\bfA\t\bfD_\bfA^{-1} & \bfV_{12} \\
	\eta \, \bfU_\bfA\bfD_\bfA^{-1}         & \bfV_{22}
\end{bmatrix},$$
with arbitrary $\bfV_{12}$ and $\bfV_{22}$ satisfying $\bfV_{12}\t\bfV_{12}+\bfV_{22}\t\bfV_{22} = \bfI_n$ and $\bfA\bfV_{12}+\eta\bfV_{22} = \bfzero_{m\times n}.$
\end{lemma}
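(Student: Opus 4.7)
The strategy is to verify directly that the proposed factors satisfy the defining properties of an SVD: $\bfU_\bfB$ and $\bfV_\bfB$ are orthogonal, $\bfSigma_\bfB$ is nonnegative diagonal on its leading block, and $\bfB \bfV_\bfB = \bfU_\bfB \bfSigma_\bfB$. The left factors fall out of
\[
\bfB \bfB\t = \bfA \bfA\t + \eta^2 \bfI_m = \bfU_\bfA\,(\bfSigma_\bfA \bfSigma_\bfA\t + \eta^2 \bfI_m)\,\bfU_\bfA\t.
\]
Reading off the eigenvalues $\sigma_i^2(\bfA)+\eta^2$ (with an extra $\eta^2$-block when $m > n$) with eigenvectors given by the columns of $\bfU_\bfA$ justifies the choices $\bfU_\bfB = \bfU_\bfA$ and $\bfSigma_\bfB = [\bfD_\bfA\ \ \bfzero]$, and by construction yields the identity $\bfD_\bfA^2 = \bfSigma_\bfA \bfSigma_\bfA\t + \eta^2 \bfI_m$ that will be used throughout.

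Next I would check $\bfB \bfV_\bfB = \bfU_\bfB \bfSigma_\bfB$ block-by-block. For the first $m$ columns of $\bfV_\bfB$, using $\bfA \bfV_\bfA = \bfU_\bfA \bfSigma_\bfA$, a direct computation gives
\[
\bfA\,(\bfV_\bfA \bfSigma_\bfA\t \bfD_\bfA^{-1}) + \eta\,(\eta\,\bfU_\bfA \bfD_\bfA^{-1}) = \bfU_\bfA\,(\bfSigma_\bfA \bfSigma_\bfA\t + \eta^2 \bfI_m)\,\bfD_\bfA^{-1} = \bfU_\bfA \bfD_\bfA,
\]
which matches the first $m$ columns of $\bfU_\bfB \bfSigma_\bfB$. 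The remaining $n$ columns must be annihilated by $\bfB$, which is exactly the stated condition $\bfA \bfV_{12} + \eta \bfV_{22} = \bfzero_{m\times n}$. For orthogonality of $\bfV_\bfB$, the same algebra shows that the top-left Gram block is $\bfD_\bfA^{-1}(\bfSigma_\bfA \bfSigma_\bfA\t + \eta^2 \bfI_m)\bfD_\bfA^{-1} = \bfI_m$, the bottom-right block equals $\bfV_{12}\t \bfV_{12} + \bfV_{22}\t \bfV_{22} = \bfI_n$ by hypothesis, and the cross block collapses to $\bfD_\bfA^{-1}\bfU_\bfA\t(\bfA \bfV_{12} + \eta \bfV_{22}) = \bfzero$ via the same kernel condition.

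Finally, I would verify that such $\bfV_{12}, \bfV_{22}$ exist at all. Under the stated hypothesis on $\eta$, $\bfB = [\bfA\ \ \eta\bfI_m]$ has full row rank $m$, so $\ker{\bfB} \subset \bbR^{n+m}$ has dimension $n$; partitioning any orthonormal basis of $\ker{\bfB}$ into an upper $n\times n$ block $\bfV_{12}$ and a lower $m \times n$ block $\bfV_{22}$ supplies an admissible completion, yielding the claimed orthogonal $\bfV_\bfB$. The main technical nuisance is keeping the regimes $m \geq n$ and $m < n$ parallel: one must pad $\bfD_\bfA$ and the diagonal of $\bfSigma_\bfA \bfSigma_\bfA\t$ correctly so that the key identity $\bfD_\bfA^2 = \bfSigma_\bfA \bfSigma_\bfA\t + \eta^2 \bfI_m$ remains valid in both cases, but once this bookkeeping is fixed the rest of the algebra goes through verbatim.
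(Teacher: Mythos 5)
Your proof is correct and follows essentially the same route as the paper: both obtain $\bfU_\bfB$ and $\bfSigma_\bfB$ from the eigendecomposition $\bfB\bfB\t = \bfU_\bfA(\bfSigma_\bfA\bfSigma_\bfA\t + \eta^2\bfI_m)\bfU_\bfA\t$ and then identify the leading block columns of $\bfV_\bfB$ from the relation $\bfB\bfV_\bfB = \bfU_\bfB\bfSigma_\bfB$, with the conditions on $\bfV_{12},\bfV_{22}$ encoding the kernel of $\bfB$. Your explicit verification of the orthogonality of $\bfV_\bfB$ and of the existence of an admissible $(\bfV_{12},\bfV_{22})$ via an orthonormal basis of $\ker{\bfB}$ is a small but welcome addition that the paper leaves implicit.
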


% Proof for lemma
\begin{proof}
Let the SVD of $\bfA =\bfU_\bfA \bfSigma_\bfA \bfV_\bfA\t$ be given. First, notice that the singular values  $\sigma_j(\bfB) = \sqrt{\lambda_j(\bfB\bfB\t)}$, where $\lambda_j(\bfB \bfB\t)$ defines the $j$-th eigenvalue of the matrix $\bfB \bfB\t$ with $\lambda_1(\bfB \bfB\t)\geq \cdots \geq \lambda_n(\bfB \bfB\t)$. Since the eigenvalue decomposition of $\bfB\bfB\t$ is given by
\begin{equation} \label{eq:eigendecompB}
	\bfB\bfB\t = \bfU_\bfA (\bfSigma_\bfA \bfSigma_\bfA\t  +\eta^2 \bfI_m) \bfU_\bfA\t
\end{equation}
we have 
$$
	\bfSigma_\bfB = \left[ \bfD_\bfA \ \ \bfzero_{m \times n} \right]
$$ 
with $\bfD_\bfA$, where 
$$\bfD_\bfA = \diag{\sqrt{\sigma_1^2(\bfA)+\eta^2},\ldots, \sqrt{\sigma_n^2(\bfA)+\eta^2}, \eta, \ldots, \eta} \quad \mbox{if } m\geq n, $$
and
$$\bfD_\bfA = \diag{\sqrt{\sigma_1^2(\bfA)+\eta^2},\ldots, \sqrt{\sigma_m^2(\bfA)+\eta^2}}  \quad \mbox{if } m < n.$$
Notice that, $\bfD_\bfA$ is invertible if $\eta>0$ or $\rank{\bfA} = \max\{m,n\}$. By equation~\eqref{eq:eigendecompB} the left singular vectors of $\bfB$ correspond to the left singular vectors of $\bfA$, i.e., $\bfU_\bfB = \bfU_\bfA$. As for the right singular vectors let 
$$\bfV_\bfB = \begin{bmatrix} \bfV_{11} & 	\bfV_{12} \\ 	\bfV_{21} & 	\bfV_{22} \\ \end{bmatrix}$$
with $\bfV_{11} \in \bbR^{n \times m}, \bfV_{21} \in \bbR^{m \times m}, \bfV_{12} \in \bbR^{n \times n}$, and $\bfV_{22} \in \bbR^{m \times n}$. Then
$$ \bfB =  [\bfA \ \ \eta \, \bfI_m] = \bfU_\bfA \left[ \bfD_\bfA \ \  \bfzero_{m \times n} \right] \begin{bmatrix} \bfV_{11}\t & 	\bfV_{21}\t \\ 	\bfV_{12}\t & 	\bfV_{22}\t \\ \end{bmatrix} = [\bfU_\bfA \bfD\bfD_\bfA \bfV_{11}\t \ \ \ \bfU_\bfA \bfD_\bfA \bfV_{21}\t m]$$
and $ \bfV_{11}= \bfV_\bfA \bfSigma_\bfA\t\bfD_\bfA^{-1}$ and $\bfV_{21} = \eta \, \bfU_\bfA\bfD_\bfA^{-1}$. The matrices $\bfV_{12}$ and $\bfV_{22}$ are any matrices satisfying
$\bfV_{12}\t\bfV_{12}+\bfV_{22}\t\bfV_{22} = \bfI_n$ and $\bfV_{11}\t\bfV_{12}+\bfV_{21}\t\bfV_{22} = \bfzero_{m\times n}$ or equivalently $\bfA\bfV_{12}+\eta\bfV_{22} = \bfzero_{m\times n}$.
\end{proof}

Next, we provide a main result of our paper.	

\begin{theorem}\label{thm:mainresult}
Given matrices $\bfA \in \bbR^{m \times n}$, $\bfM \in \bbR^{n \times p}$, and $\bfP \in \bbR^{n \times m}$, with $\rank{\bfA} = k \leq n \leq m$, $ \rank{\bfM} = n \leq p$, let index $r \leq k$ and parameter $\eta \geq 0$, nonzero if $r < m$.  Define $\bfF = (\bfI_n - \bfP \bfA)\bfM\bfM\t\bfA\t - \eta^2 \bfP$. If $\rank{\bfF} \geq r$, then a global minimizer $\widehat \bfZ \in \bbR^{n \times m}$ of the problem
\begin{equation}
	\label{eqn:thmproblem}
	\min_{\rank{\bfZ} \leq r} \,\,f(\bfZ) = \norm[\fro]{ \bfZ \begin{bmatrix}
	\bfA \bfM & \eta \bfI_m \end{bmatrix} - \begin{bmatrix}
		\bfM - \bfP \bfA \bfM & -\eta \bfP
	\end{bmatrix}}^2
\end{equation}
is given by
\begin{equation} \label{eq:zhat}
	\widehat \bfZ = \bfU_{\bfH,r}\bfU_{\bfH,r}\t \bfF (\bfA \bfM \bfM\t \bfA\t +\eta^2 \bfI)^{-1},
\end{equation}
where symmetric matrix $\bfH = \bfF (\bfA \bfM \bfM\t \bfA\t +\eta^2 \bfI)^{-1} \bfF\t$ has eigenvalue decomposition $\bfH = \bfU_\bfH \bfLambda_\bfH \bfU_\bfH\t$ with eigenvalues ordered so that $\lambda_j \geq \lambda_i$ for $j < i \leq n$, and $\bfU_{\bfH,r}$ contains the first $r$ columns of $\bfU_{\bfH}$. 
Moreover, $\widehat \bfZ$ is the unique global minimizer of \eqref{eqn:thmproblem} if and only if $\lambda_r > \lambda_{r+1}$.
\end{theorem}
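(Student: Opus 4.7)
My plan is to apply Theorem~\ref{thm:Friedland} with $\bfB = \begin{bmatrix} \bfM - \bfP\bfA\bfM & -\eta\bfP \end{bmatrix}$ and $\bfC = \begin{bmatrix} \bfA\bfM & \eta\bfI_m \end{bmatrix}$, then simplify the resulting pseudoinverse and low-rank truncation into the compact form~\eqref{eq:zhat}. With this identification $\bfB \in \bbR^{n \times (p+m)}$ and $\bfC \in \bbR^{m \times (p+m)}$, so the ambient unknown $\bfZ \in \bbR^{n\times m}$ matches the role of ``$\bfZ$'' in Theorem~\ref{thm:Friedland}.

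The first preliminary step is to verify that $\bfC$ has full row rank $k = m$, which ensures $\bfC\bfC\t = \bfA\bfM\bfM\t\bfA\t + \eta^2\bfI_m$ is invertible and $\bfC^\dagger = \bfC\t(\bfC\bfC\t)^{-1}$. When $\eta > 0$, the block $\eta\bfI_m$ gives this at once; in the remaining allowed case $\eta = 0$, the hypothesis forces $r = m$, hence $k = m$ and $\rank{\bfA\bfM} = m$. Consequently $\bfV_{\bfC,m}\bfV_{\bfC,m}\t = \bfC\t(\bfC\bfC\t)^{-1}\bfC$ is the orthogonal projector onto the row space of $\bfC$. (Lemma~\ref{lem:svdExtended}, applied with $\bfA\bfM$ replacing $\bfA$, could be invoked here for an explicit SVD of $\bfC$, but it is not strictly needed.)

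The central algebraic step is the identity
\begin{equation*}
\bfB\bfC\t = (\bfM - \bfP\bfA\bfM)\bfM\t\bfA\t - \eta^2\bfP = (\bfI_n - \bfP\bfA)\bfM\bfM\t\bfA\t - \eta^2\bfP = \bfF,
\end{equation*}
which immediately yields
\begin{equation*}
(\bfB\bfV_{\bfC,m}\bfV_{\bfC,m}\t)(\bfB\bfV_{\bfC,m}\bfV_{\bfC,m}\t)\t = \bfB\bfC\t(\bfC\bfC\t)^{-1}\bfC\bfB\t = \bfF(\bfC\bfC\t)^{-1}\bfF\t = \bfH.
\end{equation*}
Thus the left singular vectors of $\bfB\bfV_{\bfC,m}\bfV_{\bfC,m}\t$ may be taken to be the columns of $\bfU_\bfH$ with singular values $\sqrt{\lambda_j}$, so the rank-$r$ truncation satisfies $(\bfB\bfV_{\bfC,m}\bfV_{\bfC,m}\t)_r = \bfU_{\bfH,r}\bfU_{\bfH,r}\t \bfB\bfV_{\bfC,m}\bfV_{\bfC,m}\t$. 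Combining this with the identity $\bfV_{\bfC,m}\bfV_{\bfC,m}\t \bfC^\dagger = \bfC^\dagger$ (since the columns of $\bfC^\dagger$ lie in the row space of $\bfC$) and $\bfB\bfC^\dagger = \bfF(\bfC\bfC\t)^{-1}$, Theorem~\ref{thm:Friedland} produces exactly the announced formula for $\widehat\bfZ$.

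For the uniqueness clause, the key observation is that $(\bfC\bfC\t)^{-1}\bfC$ has full row rank $m$, so $\rank{\bfB\bfV_{\bfC,m}\bfV_{\bfC,m}\t} = \rank{\bfF} \geq r$. Both alternatives in Theorem~\ref{thm:Friedland} then collapse to the single requirement $\sigma_r(\bfB\bfV_{\bfC,m}\bfV_{\bfC,m}\t) > \sigma_{r+1}(\bfB\bfV_{\bfC,m}\bfV_{\bfC,m}\t)$, which via the eigenvalue--singular-value correspondence established above is exactly $\lambda_r > \lambda_{r+1}$ (the edge case $r = \rank{\bfF}$ is consistent because then $\lambda_r > 0 = \lambda_{r+1}$). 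The main obstacle I anticipate is the corner-case bookkeeping around the $\eta = 0$ possibility and the clean verification that $\bfV_{\bfC,m}\bfV_{\bfC,m}\t$ passes through $\bfC^\dagger$; once those two points are pinned down, the remainder is a direct chain of substitutions into Theorem~\ref{thm:Friedland}.
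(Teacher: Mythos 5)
Your proposal is correct, and it reaches the paper's formula by a genuinely leaner route. Both you and the paper start from the same identification $\bfB = \begin{bmatrix} \bfM - \bfP\bfA\bfM & -\eta\bfP \end{bmatrix}$, $\bfC = \begin{bmatrix} \bfA\bfM & \eta\bfI_m \end{bmatrix}$ and invoke Theorem~\ref{thm:Friedland}, but the paper then constructs everything explicitly: it forms the generalized SVD of $\left\{\bfA,\bfM\t\right\}$, takes an SVD of $\bfL=\bfSigma\bfG^{-1}\bfG^{-\top}\bfS\t$ to get an SVD of $\bfA\bfM$, applies Lemma~\ref{lem:svdExtended} to write out the full block SVD of $\bfC$ (including the nuisance blocks $\bfV_{12},\bfV_{22}$), and then builds an explicit SVD of $\bfK=\bfB\bfV_{\bfC,m}\bfV_{\bfC,m}\t$ in order to read off $\bfK_r$ and $\bfK_r\bfC^\dagger$. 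You bypass all of that with three coordinate-free identities: $\bfV_{\bfC,m}\bfV_{\bfC,m}\t = \bfC\t(\bfC\bfC\t)^{-1}\bfC$ is the row-space projector (using full row rank of $\bfC$, which you correctly secure in both the $\eta>0$ and $\eta=0$, $r=m$ cases), $\bfB\bfC\t=\bfF$ so that $\bfK\bfK\t=\bfF(\bfC\bfC\t)^{-1}\bfF\t=\bfH$, and $\bfK_r=\bfU_{\bfH,r}\bfU_{\bfH,r}\t\bfK$ together with $\bfV_{\bfC,m}\bfV_{\bfC,m}\t\bfC^\dagger=\bfC^\dagger$. Your rank and uniqueness bookkeeping ($\rank{\bfK}=\rank{\bfF}$ because $(\bfC\bfC\t)^{-1}\bfC$ has full row rank, and the collapse of Friedland's two alternatives to $\lambda_r>\lambda_{r+1}$ under $\rank{\bfF}\ge r$) matches the paper's conclusion and is handled more carefully at the edge case $r=\rank{\bfF}$. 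What the paper's longer computation buys is explicit GSVD-based factors that connect to the formulas of \cite{Chung2015} and to Lemma~\ref{lem:svdExtended}, which are reused conceptually elsewhere; what your argument buys is brevity, no dependence on Lemma~\ref{lem:svdExtended} or the GSVD at all, and fewer places for block-dimension errors to hide.
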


\begin{proof}
	We will use Theorem~\ref{thm:Friedland} where $\bfB = \left[ \left( \bfI_n - \bfP\bfA \right) \bfM \, \ \ \,  -\eta \bfP \right] $ and $\bfC = \left[ \bfA\bfM \, \ \ \, \eta \bfI_m \right]$.
Let 	
$$ \bfU\t\bfA \bfG = \bfSigma \quad \mbox{and} \quad \bfV\t\bfM\t\bfG = \bfS$$
with	
$$ \bfSigma = \begin{bmatrix} \diag{\sigma_1,\ldots,\sigma_n} \\ \bfzero_{(m-n)\times n}\end{bmatrix} 
		\quad \mbox{and} \quad  
		\bfS = \begin{bmatrix} \diag{s_1,\ldots,s_n} \\ \bfzero_{(p-n)\times n}
	\end{bmatrix}$$ 
denote the generalized SVD of $\left\{ \bfA, \bfM\t \right\}$ and let $\bfL$ be defined by $\bfL = \bfSigma \bfG^{-1}\bfG^{-\top}\bfS\t$ with its SVD given by $ \bfL = \bfU_\bfL \bfSigma_\bfL \bfV_\bfL\t$. Then $\bfA\bfM = \bfU_{\bfA\bfM} \bfSigma_\bfL \bfV_{\bfA\bfM}\t$, where $\bfU_{\bfA\bfM}  = \bfU \bfU_\bfL$ and $\bfV_{\bfA\bfM}  = \bfV \bfV_\bfL$. Using Lemma~\ref{lem:svdExtended}, the SVD of $\bfC$ is given by
$$ 
\bfU_\bfC = \bfU_{\bfA\bfM}, \quad \bfSigma_\bfC = 
	\begin{bmatrix}
		\bfD_{\bfA\bfM} & \bfzero_{m \times p}
	\end{bmatrix}
\quad \mbox{and} \quad 
\bfV_\bfC = 
	\begin{bmatrix}
		\bfV_{\bfA\bfM}\bfSigma_\bfL\t\bfD_{\bfA\bfM}^{-1} & \bfV_{12} \\[1ex] 
		\eta \, \bfU_{\bfA\bfM}\bfD_{\bfA\bfM}^{-1}         & \bfV_{22}
	\end{bmatrix}, 
$$
with 
\begin{align*}
		\bfD_{\bfA\bfM} &= \diag{\sqrt{\sigma_1^2(\bfA\bfM)+\eta^2},\ldots, \sqrt{\sigma_n^2(\bfA\bfM)+\eta^2}, \eta, \ldots, \eta}, \quad\mbox{for } m\geq p, \\
		% \mbox{and}&\\
		\bfD_{\bfA\bfM} &= \diag{\sqrt{\sigma_1^2(\bfA\bfM)+\eta^2},\ldots, \sqrt{\sigma_m^2(\bfA\bfM)+\eta^2}},\quad
		\mbox{for } m < p,
\end{align*}
and appropriately defined $\bfV_{12}$ and $\bfV_{22}$. Notice that $\bfD_{\bfA\bfM}$ is invertible and $\rank{\bfC} = m$, if either $\eta >0$ or $\rank{\bfA\bfM} = m$.  Also acknowledge that $\bfD_{\bfA\bfM}^2 =\bfSigma_\bfL\bfSigma_\bfL\t+\eta^2 \bfI_m$. Thus, the pseudoinverse of $\bfC$ is given by 
$$
\bfC^\dagger =
	\begin{bmatrix}
		\bfV_{\bfA\bfM}      & \bfzero_{p\times m} \\
		\bfzero_{m \times p} & \bfU_{\bfA\bfM}
	\end{bmatrix}
	\begin{bmatrix}
		\bfSigma_\bfL\t \\
		\eta \, \bfI_m
	\end{bmatrix}
	\bfD_{\bfA\bfM}^{-2}\bfU_{\bfA\bfM}\t
$$
and
$$
\bfV_{\bfC,m}\bfV_{\bfC,m}\t =
\begin{bmatrix}
	\bfV_{\bfA\bfM}\bfSigma_\bfL\t\bfD_{\bfA\bfM}^{-2}\bfSigma_\bfL\bfV_{\bfA\bfM}\t & 
	\eta \, \bfV_{\bfA\bfM}\bfSigma_\bfL\t\bfD_{\bfA\bfM}^{-2}
	\bfU_{\bfA\bfM}\t\\[1ex] 
	\eta \, \bfU_{\bfA\bfM}\bfD_{\bfA\bfM}^{-2}\bfSigma_\bfL\bfV_{\bfA\bfM}\t &
	\eta^2 \, \bfU_{\bfA\bfM}\bfD_{\bfA\bfM}^{-2} \bfU_{\bfA\bfM}\t
\end{bmatrix}.
$$
Let $\bfF = (\bfI_n - \bfP \bfA)\bfM \bfV_{\bfA\bfM}\bfSigma_\bfL\t \bfU_{\bfA\bfM}\t -\eta^2 \, \bfP $, then 
\begin{align}
	\label{eqn:defineK}
\bfK &= \bfB\bfV_{\bfC,m}\bfV_{\bfC,m}\t = \bfF \bfU_{\bfA \bfM}\bfD_{\bfA\bfM}^{-2}
	\begin{bmatrix}  \bfSigma_\bfL\bfV_{\bfA\bfM}\t	& \eta \, \bfU_{\bfA\bfM}\t 
	\end{bmatrix}.
\end{align}

Notice that $\rank{\bfK} \geq r$, since $\rank{\bfF} \geq r$ by assumption.
Then, let symmetric matrix $\bfH = \bfK \bfK\t = \bfF \bfU_{\bfA \bfM} \bfD_{\bfA\bfM}^{-2} \bfU_{\bfA \bfM}\t \bfF\t$ have eigenvalue decomposition $\bfH = \bfU_\bfH \bfLambda_\bfH \bfU_\bfH\t$ with eigenvalues ordered so that $\lambda_j \geq \lambda_i,$ for $j < i \leq n$.  Next we proceed to get an SVD of $\bfK$,
$$ \bfK = \bfU_\bfH \left[ \bfLambda_\bfH^{1/2}\, |\, \bfzero_{n \times(m+p-n)} \right] \bfV_\bfK\t $$
with	$$ \bfV_\bfK = \begin{bmatrix}
		\bfV_{11} &		\bfV_{12} & \bfV_{13}\\
		\bfV_{21} &		\bfV_{22}	 & \bfV_{23}
	\end{bmatrix}, $$
where $\bfV_{11} \in \bbR^{p \times r}, \bfV_{21} \in \bbR^{m \times r}, \bfV_{12} \in \bbR^{p \times(n-r)},$ and remaining matrices are defined accordingly.
Then equating the SVD of $\bfK$ with~\eqref{eqn:defineK} and using a similar argument as in Lemma~\ref{lem:svdExtended}, we get 
$$\bfU_\bfH\t \bfF \bfU_{\bfA \bfM} \bfD_{\bfA\bfM}^{-2}\bfSigma_\bfL\bfV_{\bfA\bfM}\t  = \bfLambda_\bfH^{1/2} \begin{bmatrix} \bfV_{11}\t \\ \bfV_{12}\t 
	\end{bmatrix}$$
and
$$ \eta \bfU_\bfH\t \bfF \bfU_{\bfA \bfM} \bfD_{\bfA\bfM}^{-2} \bfU_{\bfA\bfM}\t  = \bfLambda_\bfH^{1/2} \begin{bmatrix} \bfV_{21}\t \\ \bfV_{22}\t 
	\end{bmatrix}.$$
Since  $\bfLambda_{\bfH,r}$ (the principal $r \times r$ submatrix of $\bfLambda_\bfH$) is invertible, the transpose of the first $r$ columns of $\bfV_\bfK$ have the form,
\begin{align*}
	\bfV_{\bfK,r}\t &=	\left[  \bfV_{11}\t  \,| \,   \bfV_{21}\t               \right] \\
	  &=  \bfLambda_{\bfH,r}^{-1/2} 	\left[  \bfI_r       \,| \,   \bfzero_{r\times (n-r)} \right] \bfU_\bfH\t\bfF \bfU_{\bfA \bfM} \bfD_{\bfA\bfM}^{-2}    \left[\bfSigma_\bfL\bfV_{\bfA\bfM}\t \, | \, \eta \, \bfU_{\bfA\bfM}\t \right]\\
	  &=  \bfLambda_{\bfH,r}^{-1/2} \bfU_{\bfH,r}\t\bfF \bfU_{\bfA \bfM} \bfD_{\bfA\bfM}^{-2}    \left[\bfSigma_\bfL\bfV_{\bfA\bfM}\t \, | \, \eta \, \bfU_{\bfA\bfM}\t \right]
\end{align*}
and the best rank $r$ approximation of $\bfK$ is given by
\begin{align*}
	\bfK_r &= \bfU_{\bfH,r} \bfLambda_{\bfH,r}^{1/2}\bfV_{\bfK,r}\t \\
&=\bfU_{\bfH,r}\bfU_{\bfH,r}\t\bfF \bfU_{\bfA \bfM} \bfD_{\bfA\bfM}^{-2}    \left[\bfSigma_\bfL \, | \, \eta \, \bfI_m \right]
\begin{bmatrix}
	\bfV_{\bfA\bfM}\t & \bfzero_{p \times m} \\ \bfzero_{ m\times p} &\bfU_{\bfA\bfM}\t
\end{bmatrix}.
\end{align*}
Finally, using Theorem~\ref{thm:Friedland} we find that all global minimizers of $f$ with rank at most $r$ can be written as
\begin{align*}
	\widehat\bfZ &= \bfK_r \bfC^\dagger\\
			&=\bfU_{\bfH,r}\bfU_{\bfH,r}\t\bfF \bfU_{\bfA\bfM}\bfD_{\bfA\bfM}^{-2}  \left(\bfSigma_\bfL\bfSigma_\bfL\t +\eta^2\bfI_m\right)  \bfD_{\bfA\bfM}^{-2}\bfU_{\bfA\bfM}\t\\
&=\bfU_{\bfH,r}\bfU_{\bfH,r}\t\bfF (\bfA \bfM \bfM\t \bfA\t +\eta^2 \bfI)^{-1},
\end{align*}
where $\widehat \bfZ$ is a {\em unique} global minimizer of \eqref{eqn:thmproblem} if and only if $\lambda_r > \lambda_{r+1}$ since this condition makes the choice of $\bfU_{\bfH,r}$ unique.
\end{proof}

% section proof_general_case (end)

\section{Efficient methods to compute ORIM $\widehat\bfZ$} % (fold)
\label{sub:computational_methods_for_obtaining_bfz}

The computational cost to compute a global minimizer $\widehat \bfZ$ according to Theorem~\ref{thm:mainresult} requires the computation of a GSVD of $\left\{\bfA,\bfM\t\right\}$, an SVD of $\bfL$, and a partial eigenvalue decomposition of $\bfH$. For large-scale problems this may be computational prohibitive, so we seek an alternative approach to efficiently compute ORIM $\widehat \bfZ$. In the following we decompose the optimization problem into smaller subproblems and use efficient methods to solve the subproblems. The optimality of our update approach is verified by the following corollary of Theorem~\ref{thm:mainresult}.

\begin{corollary}\label{coro:thm}
Assume all conditions of Theorem~\ref{thm:mainresult} are fulfilled. Let $\widehat \bfZ_r$ be a global minimizer of~\eqref{eqn:thmproblem} of maximal rank $r$ and let $\widehat \bfZ_{r+\ell}$ be a global minimizer of~\eqref{eqn:thmproblem} of maximal rank $r+\ell$. Then $\tilde\bfZ_{\ell} = \widehat \bfZ_{r+\ell}-\widehat \bfZ_r$ is of maximal rank $\ell$ and the global minimizer of 
	\begin{equation}\label{eq:updateFormula}
		\tilde\bfZ_{\ell} = \argmin_{\rank{\bfZ}\leq \ell}\norm[\rm F]{\left(\widehat\bfZ_r+\bfZ \right)
		\begin{bmatrix} \bfA\bfM & \eta\,\bfI_m \end{bmatrix} - 
		\begin{bmatrix} \bfM-\bfP\bfA\bfM & -\eta\,\bfP \end{bmatrix}}^2 .
	\end{equation}
	Furthermore, $\tilde\bfZ_{\ell}$ is the unique global minimizer if and only if $\lambda_r > \lambda_{r+1}$ and $\lambda_{r+\ell} > \lambda_{r+\ell+1}$.
\end{corollary}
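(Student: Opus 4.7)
The plan is to leverage the explicit formula \eqref{eq:zhat} from Theorem~\ref{thm:mainresult} directly, rather than re-deriving the optimality conditions from scratch. Applying the theorem at rank $r$ and rank $r+\ell$ yields
\[
\widehat\bfZ_r = \bfU_{\bfH,r}\bfU_{\bfH,r}\t \bfF\,\bfN\quad\text{and}\quad \widehat\bfZ_{r+\ell} = \bfU_{\bfH,r+\ell}\bfU_{\bfH,r+\ell}\t \bfF\,\bfN,
\]
where I write $\bfN = (\bfA\bfM\bfM\t\bfA\t+\eta^2\bfI)^{-1}$ for brevity and the eigendecomposition $\bfH = \bfU_\bfH\bfLambda_\bfH\bfU_\bfH\t$ is the same in both formulas. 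Subtracting gives
\[
\tilde\bfZ_\ell \;=\; \widehat\bfZ_{r+\ell} - \widehat\bfZ_r \;=\; \bigl(\bfU_{\bfH,r+\ell}\bfU_{\bfH,r+\ell}\t - \bfU_{\bfH,r}\bfU_{\bfH,r}\t\bigr)\bfF\,\bfN.
\]
Because $\bfU_\bfH$ is orthogonal, the bracketed matrix is the orthogonal projection onto the span of columns $r+1,\dots,r+\ell$ of $\bfU_\bfH$; in particular it factors as $\bfU_\bfH^{(r{+}1{:}r{+}\ell)}(\bfU_\bfH^{(r{+}1{:}r{+}\ell)})\t$ and has rank at most $\ell$. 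This immediately establishes $\rank(\tilde\bfZ_\ell)\le\ell$, so $\tilde\bfZ_\ell$ is feasible for~\eqref{eq:updateFormula}.

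The optimality argument is then short. Let $g(\bfW)$ denote the Frobenius-norm objective in~\eqref{eqn:thmproblem} evaluated at $\bfW$, so that~\eqref{eq:updateFormula} is $\min_{\rank(\bfZ)\le\ell} g(\widehat\bfZ_r+\bfZ)$. For any admissible $\bfZ$ with $\rank(\bfZ)\le\ell$, subadditivity of rank gives $\rank(\widehat\bfZ_r+\bfZ)\le r+\ell$, and hence
\[
g(\widehat\bfZ_r+\bfZ) \;\ge\; \min_{\rank(\bfW)\le r+\ell} g(\bfW) \;=\; g(\widehat\bfZ_{r+\ell}) \;=\; g(\widehat\bfZ_r + \tilde\bfZ_\ell).
\]
Thus $\tilde\bfZ_\ell$ attains the minimum of~\eqref{eq:updateFormula}.

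For uniqueness, suppose $\bfZ^\star$ is any other global minimizer of~\eqref{eq:updateFormula}. Then $\widehat\bfZ_r+\bfZ^\star$ has rank at most $r+\ell$ and achieves $g(\widehat\bfZ_{r+\ell})$, so it is a global minimizer of the rank-$(r+\ell)$ problem~\eqref{eqn:thmproblem}. The uniqueness clause of Theorem~\ref{thm:mainresult} applied at level $r+\ell$ forces $\widehat\bfZ_r+\bfZ^\star=\widehat\bfZ_{r+\ell}$ provided $\lambda_{r+\ell}>\lambda_{r+\ell+1}$, whence $\bfZ^\star=\tilde\bfZ_\ell$; similarly $\lambda_r>\lambda_{r+1}$ is needed so that $\widehat\bfZ_r$ itself is unambiguously the one being updated. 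Conversely, if either gap fails, Theorem~\ref{thm:mainresult} supplies a second minimizer at the corresponding level whose difference with the fixed partner produces a distinct competing update, so both gap conditions are necessary as well.

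The only real subtlety I anticipate is the rank-$\ell$ bound on $\tilde\bfZ_\ell$: one must note that \emph{the same} eigenbasis $\bfU_\bfH$ appears at both ranks in~\eqref{eq:zhat} (because $\bfH$ does not depend on $r$), so that $\bfU_{\bfH,r}$ is a literal submatrix of $\bfU_{\bfH,r+\ell}$; otherwise the difference of two rank-$r$ and rank-$(r+\ell)$ matrices would only be bounded by $2r+\ell$ in rank. Everything else is a direct bookkeeping consequence of Theorem~\ref{thm:mainresult}.
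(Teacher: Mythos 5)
Your argument is correct, and it is essentially the intended one: the paper states this result as an immediate corollary of Theorem~\ref{thm:mainresult} and supplies no proof of its own, so your write-up fills in exactly the deduction the authors have in mind. The three ingredients you use are the right ones: (i) the rank bound $\rank{\tilde\bfZ_\ell}\le\ell$ via the nesting $\bfU_{\bfH,r}\bfU_{\bfH,r}\t \preceq \bfU_{\bfH,r+\ell}\bfU_{\bfH,r+\ell}\t$ coming from a \emph{common} eigendecomposition of $\bfH$ (the subtlety you flag at the end is indeed the only place the argument could go wrong, and the eigenvalue-gap hypotheses $\lambda_r>\lambda_{r+1}$, $\lambda_{r+\ell}>\lambda_{r+\ell+1}$ are precisely what make the two projectors, and hence the nesting, canonical); (ii) the sandwich $g(\widehat\bfZ_r+\bfZ)\ge\min_{\rank{\bfW}\le r+\ell}g(\bfW)=g(\widehat\bfZ_r+\tilde\bfZ_\ell)$ from subadditivity of rank, which shows the two optimization problems share the same optimal value; and (iii) uniqueness at level $r+\ell$ to pin down the minimizer of \eqref{eq:updateFormula}. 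The one place you are looser than you could be is the converse of the uniqueness claim: the assertion that failure of either gap ``produces a distinct competing update'' deserves a sentence noting that when $\lambda_{r+\ell}=\lambda_{r+\ell+1}$ the level-$(r+\ell)$ problem admits a family of minimizers $\bfU\bfU\t\bfF(\bfA\bfM\bfM\t\bfA\t+\eta^2\bfI)^{-1}$ all of whose differences with the fixed $\widehat\bfZ_r$ remain feasible (rank $\le\ell$) and optimal, which is what actually yields non-uniqueness of \eqref{eq:updateFormula}. That is a refinement, not a gap; the proof stands.
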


The significance of the corollary is as follows. Assume we are given a rank $r$ approximation $\widehat\bfZ_r$ and we are interested in updating our approximation to a rank $r+\ell$ approximation $\widehat \bfZ_{r+\ell}$. To calculate the optimal rank $r+\ell$ approximation $\widehat \bfZ_{r+\ell}$, we just need to solve a rank $\ell$ optimization problem of the form~\eqref{eq:updateFormula} and then update the solution, $\widehat\bfZ_{r+\ell} = \widehat \bfZ_r + \tilde\bfZ_{\ell}$. Thus, computing a rank $r$ ORIM matrix $\widehat \bfZ_r$ can be achieved by solving a sequence of smaller rank problems and updating the solutions. Algorithm~\ref{alg:rank1update} describes such an rank-1 update approach.
\begin{algorithm} 
\caption{(rank-1 update approach)}\label{alg:rank1update}
\begin{algorithmic}[1]
\REQUIRE $\bfA,\bfM, \bfP, \eta$
\STATE set $\widehat\bfZ_0 = \bfzero_{n\times m}$, $r = 0$
\WHILE{stopping criteria not reached}
\STATE 	$\displaystyle
		\tilde\bfZ_r = \argmin_{\rank{\bfZ}\leq 1}\norm[\rm F]{\left(\widehat\bfZ_r+\bfZ \right)
		\begin{bmatrix} \bfA\bfM & \eta\,\bfI_m \end{bmatrix} - 
		\begin{bmatrix} \bfM-\bfP\bfA\bfM & -\eta\,\bfP \end{bmatrix}}^2$ \label{alg:lineOpt}
\STATE $\widehat\bfZ_{r+1} = \widehat\bfZ_{r} + \tilde\bfZ_r$
\STATE $r = r+1$
\ENDWHILE
\ENSURE optimal $\widehat\bfZ_r$
\end{algorithmic}
\end{algorithm}

The main question in Algorithm~\ref{alg:rank1update} is how to efficiently solve the optimization problem in line~\ref{alg:lineOpt}. First, we reformulate the rank-1 constraint by letting $\bfZ = \bfx\bfy\t,$ where $\bfx\in\bbR^n$ and $\bfy\in\bbR^m$ and defining $\bfX_r = [\bfx_1,\ldots,\bfx_r] \in \bbR^{n \times r}$ and $\bfY_r = [\bfy_1,\ldots,\bfy_r] \in \bbR^{m \times r}$.  Then $\widehat\bfZ_r = \bfX_r\bfY_r\t$, and the optimization problem in line~\ref{alg:lineOpt} of Algorithm~\ref{alg:rank1update} reads
\begin{equation}\label{eq:optxy}
	\resizebox{.92 \textwidth}{!} 
	{$\displaystyle
	(\bfx_{r+1},\bfy_{r+1}) = \argmin_{(\bfx,\bfy)}
			\norm[\rm F]{\left(\bfX_r\bfY_r\t+\bfx\bfy\t \right)
			\begin{bmatrix} \bfA\bfM & \eta\,\bfI_m \end{bmatrix} - 
			\begin{bmatrix} \bfM-\bfP\bfA\bfM & -\eta\,\bfP \end{bmatrix}}^2.
	$}			
\end{equation}
Although standard optimization methods could be used, care must be taken since
this quartic problem is of dimension $n+m$ and ill-posed since the decomposition $\bfZ = \bfx\bfy\t$ is not unique. Notice that for fixed $\bfy$, optimization problem~\eqref{eq:optxy} is quadratic and convex in $\bfx$ and vise versa.  Thus, we propose to use an alternating direction optimization approach.  Assume $\bfx \neq \bfzero_{n\times 1}$, $\bfy \neq \bfzero_{m \times 1}$, and $\eta>0$, then the partial optimization problems resulting from~\eqref{eq:optxy} are ensured to have unique minimizers
\begin{equation}\label{eq:minx}
		\widehat\bfx
	 = \frac{\bfM\bfM\t\bfA\t\bfy -(\bfP+\bfX_r\bfY_r\t)\left(\bfA\bfM\bfM\t\bfA\t + \eta^2\bfI_m\right)\bfy} 
	 {\bfy\t\left( \bfA\bfM\bfM\t\bfA\t+\eta^2\bfI_m\right)\bfy} \quad \mbox{ for fixed } \bfy,
\end{equation}
and
$$
	\widehat\bfy = \frac{\left( \bfA\bfM\bfM\t\bfA\t+\eta^2\bfI_m \right)^{-1}\bfA\bfM\bfM\t\bfx  - (\bfP + \bfX_r\bfY_r\t)\t \bfx }{\bfx\t\bfx} \quad \mbox{ for fixed } \bfx.
$$
Notice that computing $\widehat\bfx$ in~\eqref{eq:minx} only requires matrix-vector products, while
computing $\widehat \bfy$ requires a linear solve. Since decomposition $\bfZ = \bfx\bfy\t$ is not unique, we propose to select the computationally convenient decomposition where $\norm[2]{\bfx} = 1$ and $\bfx \perp \bfX_r$. This results in a simplified formula for $\widehat \bfy$, i.e.,
\begin{equation}\label{eq:miny}
		\widehat\bfy = \left( \bfA\bfM\bfM\t\bfA\t+\eta^2\bfI_m \right)^{-1}\bfA\bfM\bfM\t\bfx  - \bfP\t \bfx.
\end{equation}
Noticing that~\eqref{eq:miny} is just the normal equations solution to the following least squares problem,
\begin{equation}\label{eq:minyls}
\min_\bfy	\norm[2]{\begin{bmatrix} \bfM\t\bfA\t	\\ \eta\, \bfI_m \end{bmatrix} \bfy 	- \begin{bmatrix}
		\bfM\t\bfx - \bfM\t\bfA\t\bfP\t\bfx  \\  -\eta \bfP\t\bfx
	\end{bmatrix}}\,,
\end{equation}
we propose to use a computationally efficient least squares solver such as LSQR \cite{PaSa82a,PaSa82b}, where various methods can be used to exploit the fact that the coefficient matrix remains constant \cite{Chen2005,Benzi2002}. In addition, quasi Newton methods may improve efficiency by taking advantage of a good initial guess and a good approximation on the inverse Hessian \cite{Nocedal1999}, but such comparisons are beyond the scope of this paper.

The alternating direction approach to compute a rank-1 update is provided in Algorithm~\ref{alg:alternating}.
\begin{algorithm} 
\caption{(alternating direction approach to compute rank-1 update)}\label{alg:alternating}
\begin{algorithmic}[1]
\REQUIRE $\bfA, \bfM,\eta,\bfZ, \bfP, r$
\STATE set $\widehat\bfy = \bfone_{m \times 1}$
\WHILE{stopping criteria not reached}
	\STATE get $\widehat \bfx$ by~\eqref{eq:minx}
	\STATE normalize $\widehat\bfx = \widehat \bfx/ \norm[2]{\widehat \bfx}$
	\STATE orthogonalize by $\widehat\bfx = \widehat\bfx - \bfX_{r}\bfX_{r}\t\widehat\bfx$
	\STATE get $\widehat\bfy$ by solving~\eqref{eq:minyls}
\ENDWHILE
\STATE $\bfx_{r+1} = \widehat\bfx$ and $\bfy_{r+1} =\widehat\bfy$
\ENSURE optimal $\bfx_{r+1}$ and $\bfy_{r+1}$
\end{algorithmic}
\end{algorithm}

In summary, our proposed method to compute low-rank ORIM $\widehat\bfZ$ combines Algorithms~\ref{alg:rank1update} and~\ref{alg:alternating}.  An efficient {\sc Matlab} implementation can be found at the following website:
		\begin{center}
			\texttt{\url{https://github.com/juliannechung/ORIM.git}}
		\end{center}
		\vspace*{1ex}
Before providing illustrations and examples of our method, we make a few remarks regarding numerical implementation.
\begin{enumerate}
	\item \emph{Storage}. Algorithmically $\widehat\bfZ_r$ need never be constructed, as we only require matrices $\bfX_r$ and $\bfY_r$. This decomposition is storage preserving as long as $r\leq \frac{mn}{m+n}$ and is ideal for problems where $\bfZ$ is too large to compute or $\bfA$ can only be accessed via function call.
	\item \emph{Stopping criteria}.  For Algorithm~\ref{alg:rank1update}, the specific rank $r$ for $\widehat\bfZ_{r}$ may be user-defined, but
	oftentimes such information is not available a priori.
However, the rank-1 update approach allows us to track the improvement in the function value from rank~$r$ to rank~$r+1$.  Then an approximation of rank~$r$ is deemed sufficient when 
$f(\bfZ_{r-1}) - f(\bfZ_r) < {\rm tol}\cdot f(\bfZ_r)$, where our default tolerance is ${\rm tol} = 10^{-6}$.
Standard stopping criteria \cite{Gill1981} can be used for Algorithm~\ref{alg:alternating}.  In particular, we track improvement in the function values $f(\bfX_r\bfY_r\t)$, track changes in the arguments $\widehat\bfx$ and $\widehat\bfy$, and set a maximum iteration. Our default tolerance is $10^{-6}$.
	\item \label{item:fro} \emph{Efficient function evaluations.} Rather than computing the function value $f(\bfX_r\bfY_r\t)$ from scratch at each iteration (e.g., for determining stopping criteria), efficient updates can be done by observing that
\begin{align*}
	f(\bfX_{r+1}\bfY_{r+1}\t) =& f(\bfX_r\bfY_r\t) \\
		&+ \bfy\t\left( \bfA\bfM\bfM\t\bfA\t + \eta^2\bfI_m\right)\left(\bfy+2\bfP\t\bfx\right) - 2 \bfy\t\bfA\bfM\bfM\t\bfx\,,
\end{align*}	 
where $f(\bfzero_{n \times m}) = \norm[\fro]{(\bfI_n-\bfP\bfA)\bfM}^2 + \eta^2\,\norm[\fro]{\bfP}^2$.  Since function evaluations are only relevant for the stopping criteria, they can be discarded, if desired, or approximated using trace estimators \cite{avron2011randomized}. 
	\item \emph{Initialization.} Equation~\eqref{eq:minx} requires an initial guess for $\bfy$. One uninformed choice may be $\bfy = \bfone_{m\times 1}$, and another option is to select $\bfy$ orthogonal to $\bfY_r$, i.e., $\bfy = (\bfI_m-\bfY_r\bfY_r\t)\bfr$ with $\bfr\in \bbR^m$ chosen at random.
	\item \emph{Symmetry.} If $\bfA$ and $\bfP$ are symmetric, our rank-1 update approach could be used to compute a symmetric ORIM $\widehat\bfZ_r = \bfX_r\bfX_r\t$, but the alternating direction approach should be replaced by an appropriate method for minimizing a quartic in $\bfx$.

	\item \emph{Covariance matrix.} Since $\bfM$ in our rank update approach only occurs in the product $\bfM\bfM\t$ and since $\bfM\bfM\t = \bfM_\bfxi \bfM_\bfxi\t + \bfmu_\bfxi \bfmu_\bfxi\t = \bfGamma_\bfxi + \bfmu_\bfxi \bfmu_\bfxi\t$, our algorithm can work directly with the covariance matrix.  Thus, a symmetric factorization does not need to be computed, which is important for various classes of covariance kernels \cite{saibaba2012application}.

\end{enumerate}

\section{Numerical Results} % (fold)
\label{sec:numerics}

In this section, we provide three experiments that not only highlight the benefits of ORIM updates but also demonstrate new approaches for solving inverse problems that use ORIM updates.  In Experiment 1, we use an inverse heat equation to investigate the efficiency and accuracy of our update approach. Then in Experiment 2, we use an image deblurring example to show that more accurate solutions to inverse problems can be achieved by using ORIM rank-updates to existing regularized inverse matrices. Lastly, in Experiment 3, we show that ORIM updates can be used in scenarios where perturbed inverse problems need to be solved efficiently and accurately.

\subsection{Experiment 1: Efficiency of ORIM rank update approach} % (fold)
\label{sub:experiment_1_efficiency_of_orim_vs_svd}
The goal of this example is to highlight our new result in Theorem~\ref{thm:mainresult} and to verify the accuracy and efficiency of the update approach described in Section~\ref{sub:computational_methods_for_obtaining_bfz}.  We consider a discretized (ill-posed) inverse heat equation derived from a Volterra integral equation of the first kind on $[0,1]$ with kernel $a(s,t) = k(s-t)$, where $k(t) = \frac{t^{-3/2}}{2 \sqrt{\pi}\kappa}\e^{-\frac{1}{4\kappa^2 t}}$. 
Coefficient matrix $\bfA$ is $1,\!000 \times 1,\!000$ and is significantly ill-posed for $\kappa \in [1,2]$.  We generate $\bfA$ using the \emph{Regularization Tools} package \cite{Hansen1994}. 

As a first study, we compare ORIM $\widehat\bfZ$ with other commonly used regularized inverse matrices.  Notice that $\widehat\bfZ$ is fully determined by $\bfA,\eta,\bfM$, and $\bfP$.

For this illustration, we select $\bfP$ and $\bfM$ to be realizations of random matrices whose entries are i.i.d.~standard normal $\calN(0,1)$, and we select $\kappa = 1$ and $\eta = 0.02$.  Then we compute ORIM $\widehat \bfZ$ as in Equation~\eqref{eq:zhat} for various ranks $r$ and plot the function values $f(\widehat \bfZ)$ in Figure~\ref{fig:Example1}. For comparison, we also provide function values for other commonly used rank-$r$ reconstruction matrices, including the TSVD matrix, $\bfA_{r}^\dagger,$ the truncated Tikhonov matrix~\eqref{eqn:TTik} (TTik), and the matrix provided from Theorem~1 of \cite{Chung2015}, here referred to as ORIM$_0$. Notice that TTik and ORIM$_0$ matrices are just special cases of ORIM where $\bfM = [\,\bfI_n \ \ \bfzero_{n \times 1}\,]$ and $\bfP = \bfzero_{n \times m}$ for TTik and $\bfM = [\,\bfM_\bfxi \ \ \bfzero_{n \times 1}\,]$ and $\bfP = \bfzero_{n \times m}$ for ORIM$_0$. Figure~\ref{fig:Example1} shows that, as expected, the function values for ORIM are smallest for all computed ranks.
\begin{figure}[bthp!]
	\begin{center}
		\includegraphics[width=0.8\textwidth]{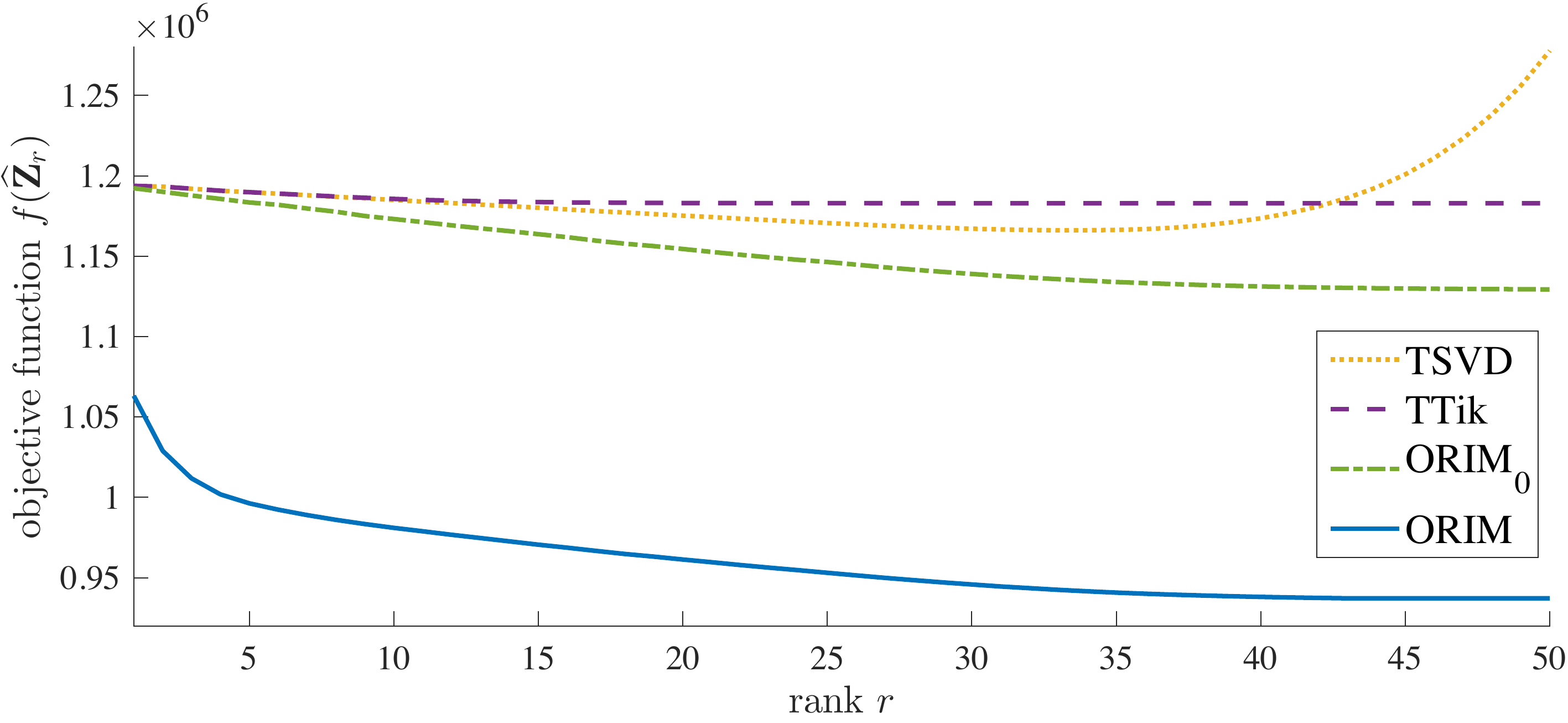}
	\end{center}
	\caption{Comparison of the function values $f(\bfZ)$ where $\bfZ$ corresponds to different reconstruction matrices. The dotted line refers to TSVD, the dashed line to truncated-Tikhonov, the dash-dotted line to ORIM$_0$ (i.e., ORIM where $\bfM_\bfxi = \bfI_n$ and $\bfmu_\bfxi=\bfzero_{n\times 1}$), and the solid line to ORIM $\widehat \bfZ$.  Results correspond to a discretized Volterra integral equation.}
	\label{fig:Example1}
\end{figure}  

We also verified our proposed rank-update approach by comparing function values computed with the rank update approach to those from Theorem~\ref{thm:mainresult}. We observed that the relative absolute errors remained below $2.9485\cdot10^{-3}$ for all computed ranks $r$, making the plot of the function values for the update approach indistinguishable from the solid line in Figure~\ref{fig:Example1}.  Thus, we omit it for clarity of presentation.\\

Next, we illustrate the efficiency of our rank update approach for solving a sequence of ill-posed inverse problems. Such scenarios commonly occur in nonlinear optimization problems such as variable projection methods where nonlinear parameters are moderately changing during the optimization process \cite{Nocedal1999,GoPe73}.  Consider again the inverse heat equation, and assume that we are given a sequence of matrices $\bfA(\kappa_j) \in \bbR^{n\times n}$, where the matrices depend nonlinearly on parameter $\kappa_j$, and we are interested in solving a sequence of problems, $\bfb(\kappa_j)= \bfA(\kappa_j)\bfxi + \bfdelta_j$ for various $\kappa_j$. 

For each problem in the sequence, one could compute a Tikhonov solution $\bfxi_{\rm Tik}(\kappa_j) = \bfV_{\bfA(\kappa_j)}\bfPsi_{\bfA(\kappa_j)}\bfU_{\bfA(\kappa_j)}\t\bfb(\kappa_j)$, where 
$$\bfPsi_{\bfA(\kappa_j)} = \diag{\frac{\sigma_1(\bfA(\kappa_j))}{\sigma_1^2(\bfA(\kappa_j))+ \eta^2}, \ldots, \frac{\sigma_n(\bfA(\kappa_j))}{\sigma_n^2 (\bfA(\kappa_j))+ \eta^2}},$$
but this approach requires an SVD of $\bfA(\kappa_j)$ for each $\kappa_j$. We consider an alternate approach, where the SVD is computed once for a fixed $\kappa_j$ and then ORIM updates are used to obtain improved regularized inverse matrices for other $\kappa_j$'s. This approach relies on the fact that small perturbations in $\bfA(\kappa_j)$ lead to small rank updates in its inverse \cite{Stewart2001}.
	
Again for the inverse heat equation we use $n = 1,\!000$ and $\eta = 0.02$ and choose $\bfM = \bfI_n$ and $\bfmu = \bfzero_{n \times 1}$. We select equidistant values for $\kappa_j\in [1,2]$, $j = 1,\ldots, 100$, and let $\bfP^{(1)} = \bfV_{\bfA(\kappa_1)}\bfPsi_{\bfA(\kappa_1)}\bfU_{\bfA(\kappa_1)}\t$ be the Tikhonov reconstruction matrix corresponding to $\kappa_1$. Then for all other problems in the sequence, we compute reconstructions as
$$\bfxi_{\rm ORIM}(\kappa_{j+1}) = \bfP^{(j+1)}\bfb(\kappa_{j+1})$$
where $\bfP^{(j+1)} = \bfP^{(j)} + \bfX^{(j+1)}\left( \bfY^{(j+1)}\right)\t$,
where $\bfX^{(j+1)}$ and $\bfY^{(j+1)}$ are the low rank ORIM updates corresponding to $\bfA(\kappa_{j+1})$. We use a tolerance ${\rm tol} = 10^{-3}$. In Figure~\ref{fig:Ex2timing}, we report computational timings for the ORIM rank update approach, compared to the SVD, and in
Figure~\ref{fig:Ex2error} we provide corresponding relative reconstruction errors, computed as ${\rm rel} = \norm[2]{\bfxi_\star - \bfxi_{\rm true}} / \norm[2]{\bfxi_{\rm true}}$, where $\bfxi_\star$ is and approximation of $\bfxi$ (here, $\bfxi_{\rm ORIM}(\kappa_{j})$ and $\bfxi_{\rm Tik}(\kappa_{j})$). We observe that the ORIM update approach requires approximately half the required CPU time compared to the SVD, and the ORIM update approach can produce relative reconstruction errors that are comparable to and even slightly better than Tikhonov.  However, we also note potential disadvantages of our approach.  In particular, the SVD can be more efficient for small $n$, although ORIM updates are significantly faster for larger problems (results not shown).  Also, using different noise levels $\eta$ in each problem or taking larger changes in $\kappa_j$ may result in higher CPU times and/or higher reconstruction errors for the update approach.  We assume that the noise levels and problems are not changing significantly.  

\begin{figure}[h]
 \centering 
   \includegraphics[width=\textwidth]{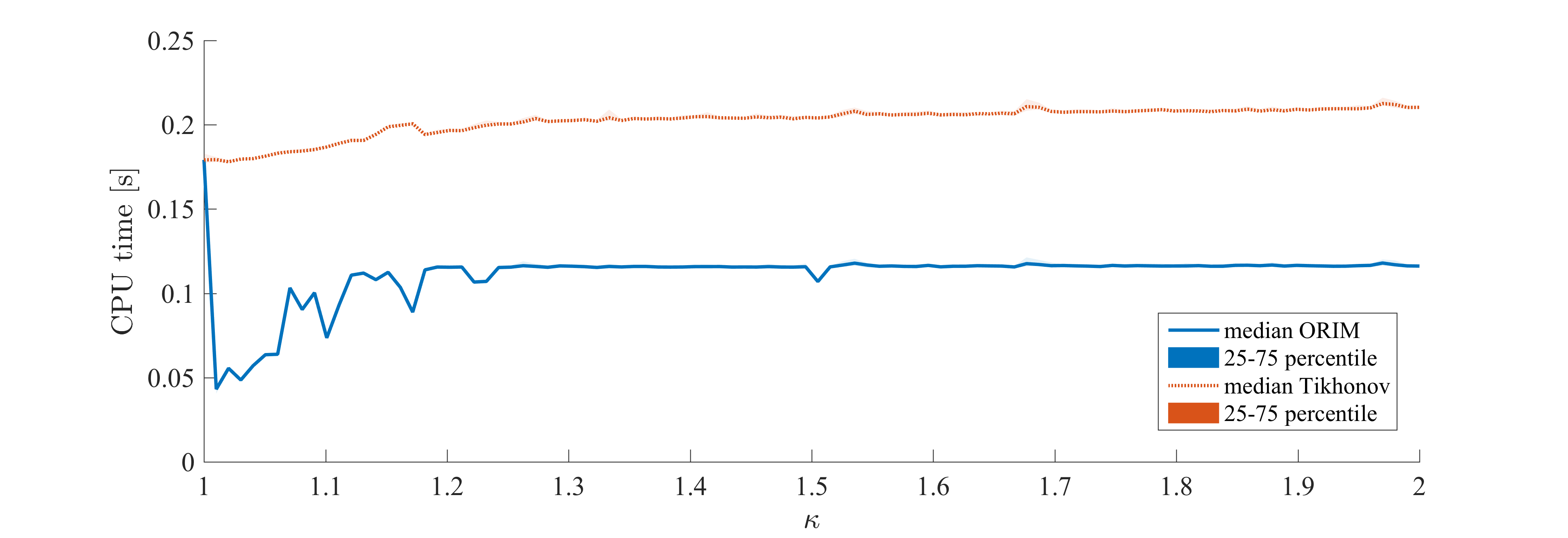}
 \caption{CPU times for computing a regularized inverse matrix using ORIM updates (solid line) and for computing the SVD to get a Tikhonov solution (dotted line) for a sequence of inverse problems varying in $\kappa$. We repeated the experiment 50 times and report the median as well as the 25-75th percentiles.}
 \label{fig:Ex2timing}
\end{figure}

\begin{figure}[h]
 \centering
   \includegraphics[width=\textwidth]{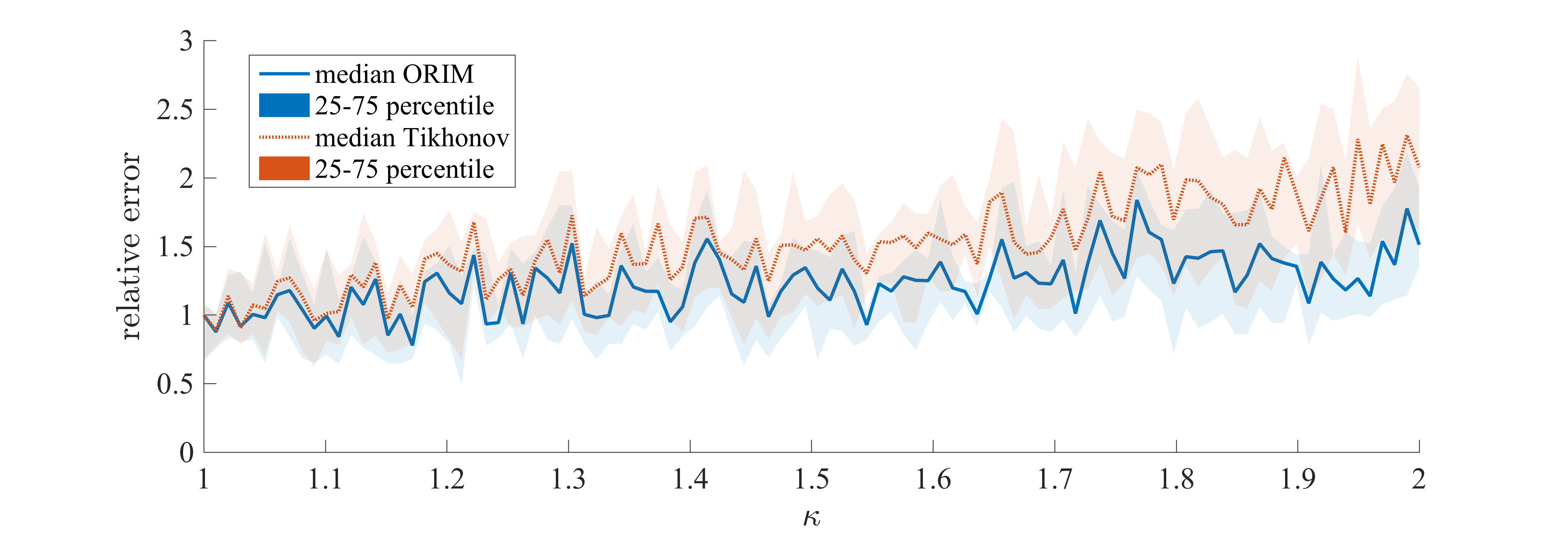}
 \caption{Relative reconstruction errors for reconstructions obtained using ORIM updates (solid line) and using Tikhonov regularization (dotted line). We report the median as well as the 25-75th percentiles for each $\kappa$ after repeating the experiment 50 times.}
 \label{fig:Ex2error}
\end{figure} 

% subsection experiment_1_efficiency_of_orim_vs_svd (end)

 \subsection{Experiment 2: ORIM Updates to Tikhonov} \label{sub:updating_inverse_matrices} % (fold)

Here we consider a classic image deblurring problem, where the model is given in~\eqref{eqn:linearsystem} where $\bfxi$ represents the desired image, $\bfA$ models the blurring process, and $\bfb$ is the blurred, observed image. The true image was taken to be the $15$-th slice of the 3D MRI image dataset that is provided in {\rm MATLAB}, which is $256 \times 256$ pixels. We assume spatially invariant blur, where the point spread function (PSF) is a $11 \times 11$ box-car blur.  We assume reflexive boundary conditions for the image.  Since the PSF is doubly symmetric, blur matrix $\bfA$ is highly structured and its singular value decomposition is given by $\bfA =\bfU_\bfA \bfSigma_\bfA \bfV_\bfA\t$, where here $\bfV_\bfA\t$ and $\bfU_\bfA$ represent the 2D discrete cosine transform (DCT) matrix and inverse 2D DCT matrix respectively \cite{Hansen2006}.  Here we use the RestoreTools package \cite{Nagy2004}.  Noise $\bfdelta$ was generated from a normal distribution, with zero mean, and scaled such that the noise level was $\norm[2]{\bfdelta}^2/ \norm[2]{\bfA \bfxi}^2= 0.01$. The true and observed images, along with the PSF, are provided in Figure~\ref{fig:mriproblem}. 

\begin{figure}[bthp!]
	\begin{center}
\begin{tabular}{ccc}
		\includegraphics[width=0.25\textwidth]{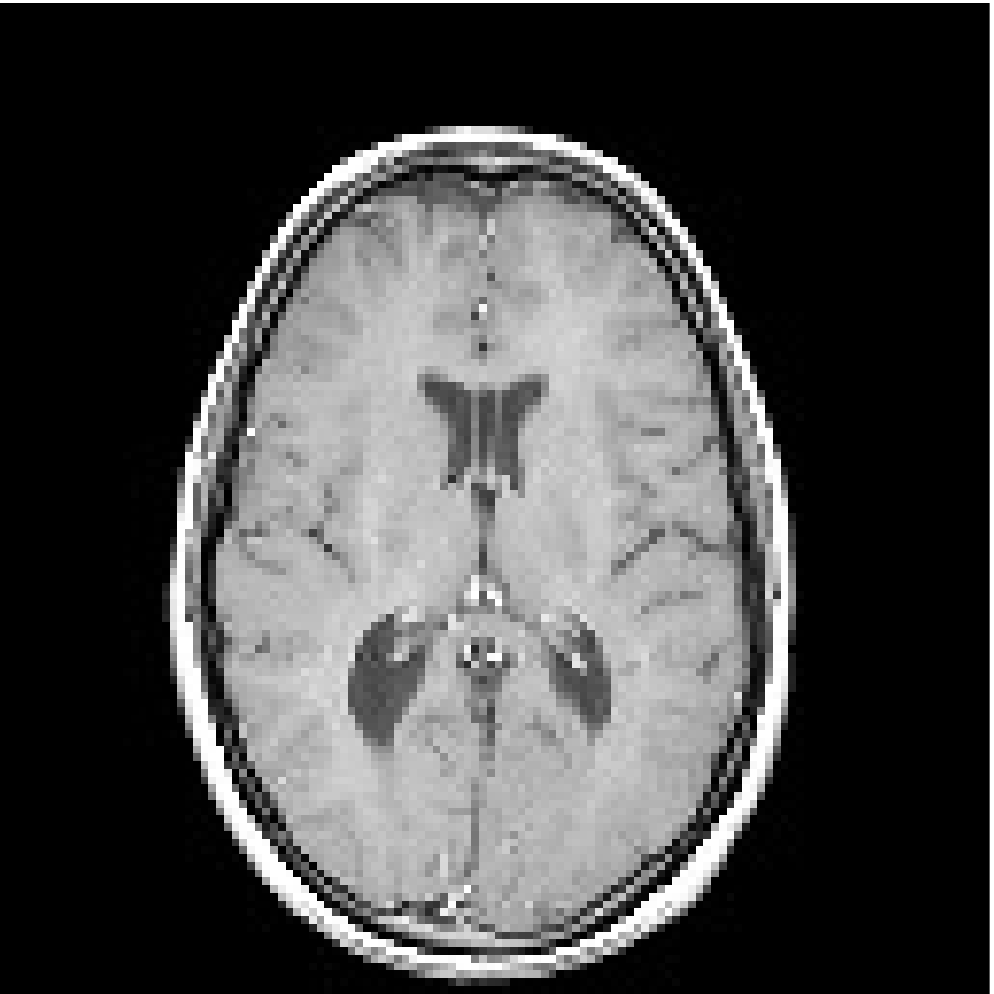} &
		\includegraphics[width=0.25\textwidth]{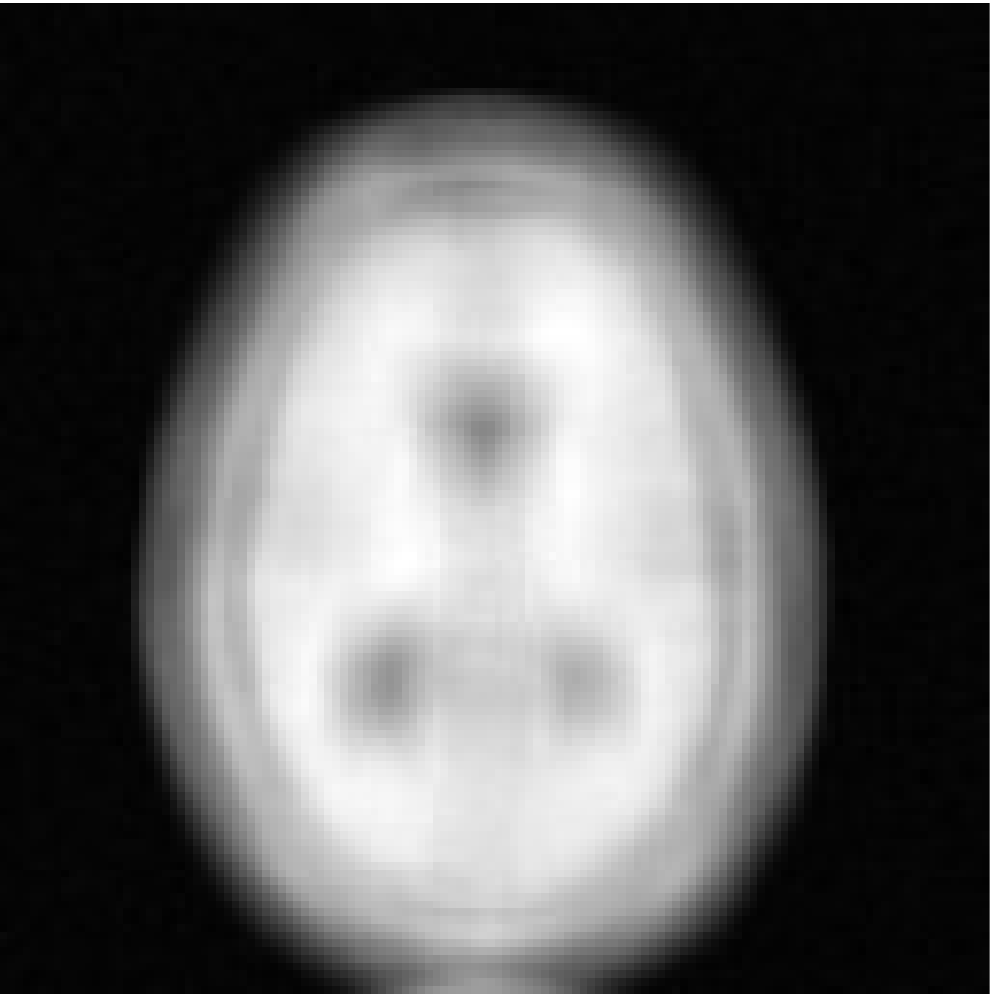}&
		\includegraphics[width=0.25\textwidth]{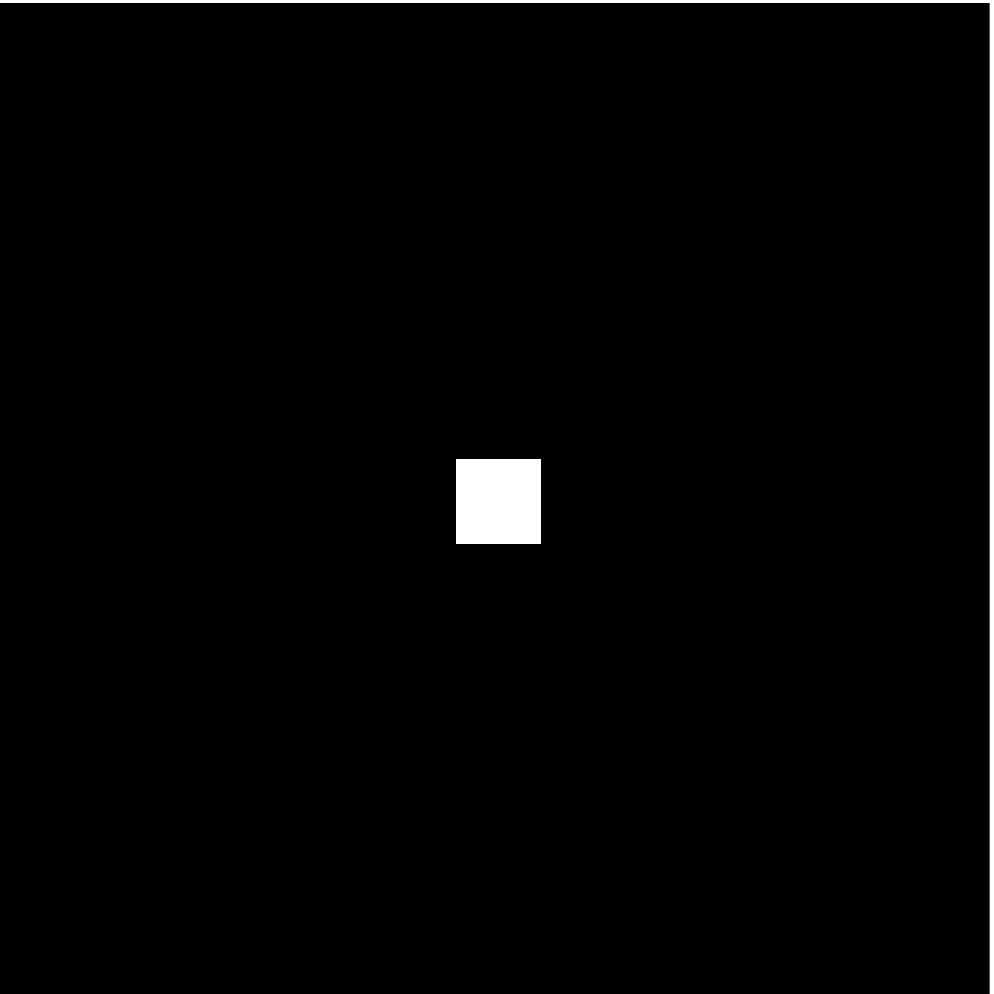}\\
		(a) True image & (b) Observed, blurred image & (c) Point spread function
\end{tabular}
	\end{center}
	\caption{Image deblurring example. The true (desired) MRI image is given (a). The observed, blurred image is provided in (b), and the PSF is provided in (c).}
	\label{fig:mriproblem}
\end{figure}

As an initial regularized inverse approximation, we use a Tikhonov reconstruction matrix, $\bfP = \bfV_\bfA (\bfSigma_\bfA\t\bfSigma_\bfA + \eta^2\bfI)^{-1} \bfSigma_\bfA^{-1} \bfU_\bfA\t$, where regularization parameter $\eta$ was selected to provide minimal reconstruction error.  That is, we used $\eta = 2.831\cdot10^{-2}$, which corresponded to the minimum of error function, $\norm[2]{\bfP\bfb-\bfxi}$. Although this approach uses the true image (which is not known in practice), our goal here is to demonstrate the improvement that can be obtained using the rank-update approach.  In practice, a standard regularization parameter selection method such as the generalized cross-validation could be used, which for this problem gave $\eta = 2.713\cdot10^{-2}$.  The Tikhonov reconstruction, $\bfP\bfb$, is provided in Figure~\ref{fig:recon}(a) along with the computed relative reconstruction error.

Next we consider various ORIM updates to $\bfP$ and evaluate corresponding reconstructions.  For the mean vector $\bfmu_\bfxi$, we use the image shown in Figure~\ref{fig:recon}(b), which was obtained by averaging images slices 8--22 of the MRI stack (omitting slice 15, the image of interest).  For efficient computations and simplicity, we assume $\bfGamma_\bfxi$ is diagonal with variances proportional to $\bfmu_\bfxi$, we choose, $\bfGamma_\bfxi = \diag{\bfmu_\bfxi}$; the matrix $\bfM_\bfxi$ is defined accordingly. We compute ORIM updates to $\bfP$ according to Algorithm~\ref{alg:rank1update} for the following cases of $\bfM$:
\begin{equation}
	\label{eqn:Mmatrices}
	\bfM_{(1)} = \begin{bmatrix}
		\bfI_n & \bfmu_\bfxi
	\end{bmatrix}, \quad \bfM_{(2)} = \begin{bmatrix}
		\bfM_\bfxi & \bfzero_{n \times 1}
	\end{bmatrix}, \quad \mbox{and} \quad
\bfM_{(3)} = \begin{bmatrix}
		\bfM_\bfxi & \bfmu_\bfxi
	\end{bmatrix}.
	\end{equation}
We refer to these matrix updates as $\widehat \bfZ_{(1)}$, $\widehat \bfZ_{(2)}$, and $\widehat \bfZ_{(3)}$ respectively, where $\widehat \bfZ_{(1)}$ is a rank-1 matrix and $\widehat \bfZ_{(2)}$ and $\widehat \bfZ_{(3)}$ are matrices of rank $5$.  Image reconstructions were obtained via matrix-vector multiplication,
$$\bfxi_{(j)} = \bfP\bfb + \widehat \bfZ_{(j)} \bfb, \quad \mbox{ for } j=1,2,3,$$
and are provided in Figure~\ref{fig:recon}(c)--(e).  Corresponding relative reconstruction errors are also provided.  
\begin{figure}[bthp!]
	\begin{center}
\begin{tabular}{cc}
		\includegraphics[width=0.25\textwidth]{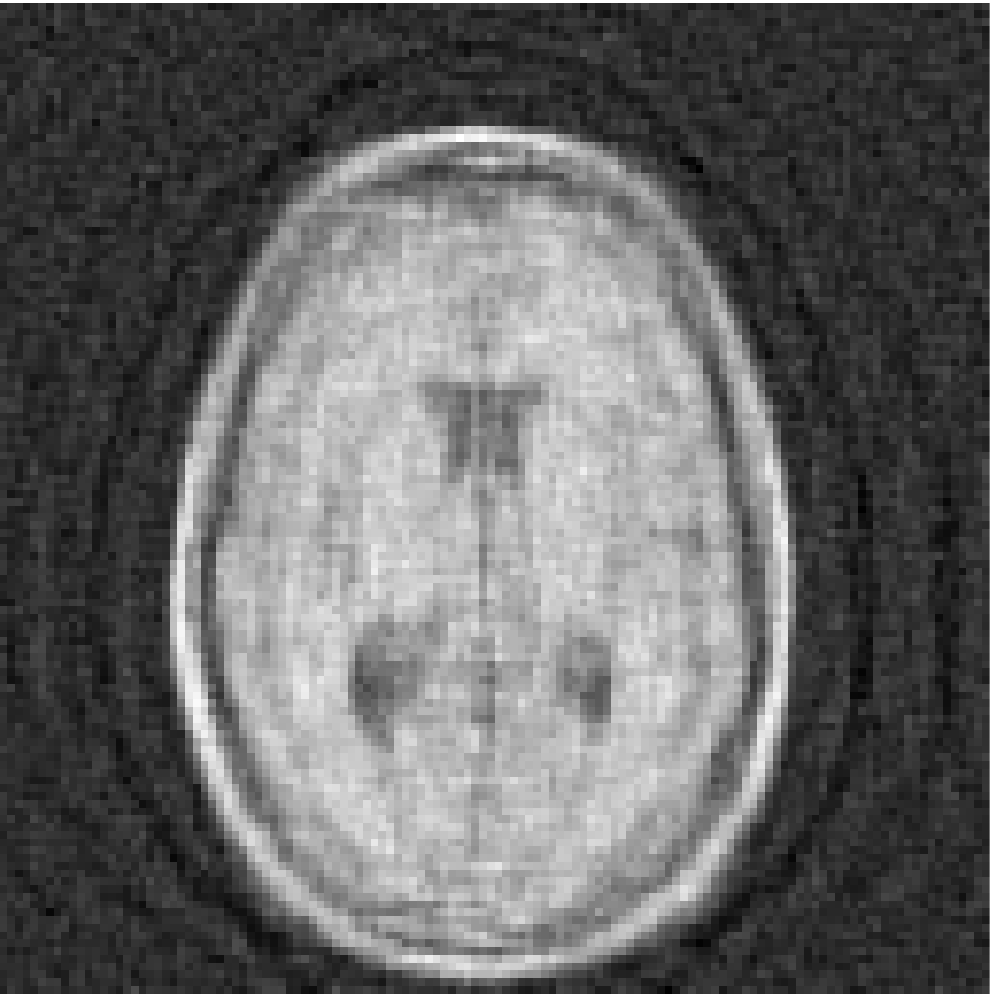}  &
		\includegraphics[width=0.25\textwidth]{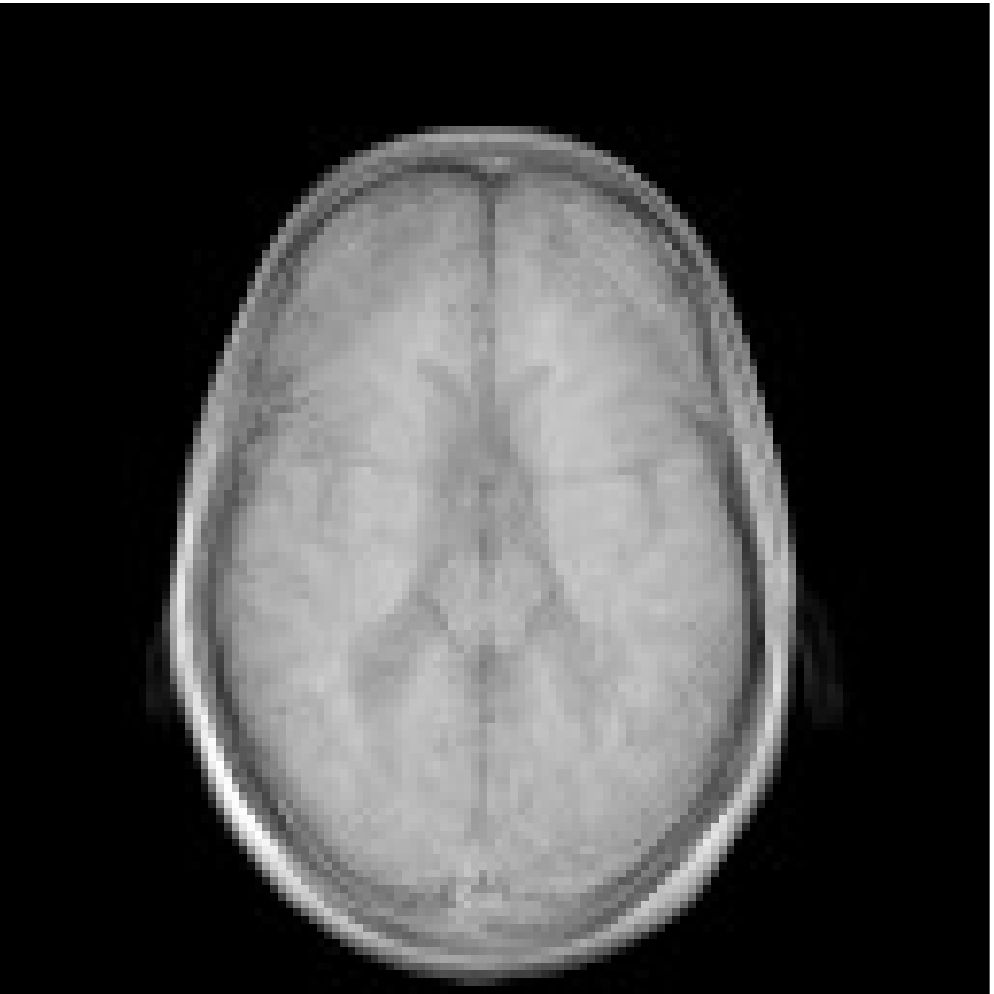}\\
		(a) Tikhonov, $\bfP\bfb$, $\rel = 0.2247$ & (b) Mean image, $\bfmu_\bfxi$\\[1ex]
\end{tabular}
\begin{tabular}{ccc}
		\includegraphics[width=0.25\textwidth]{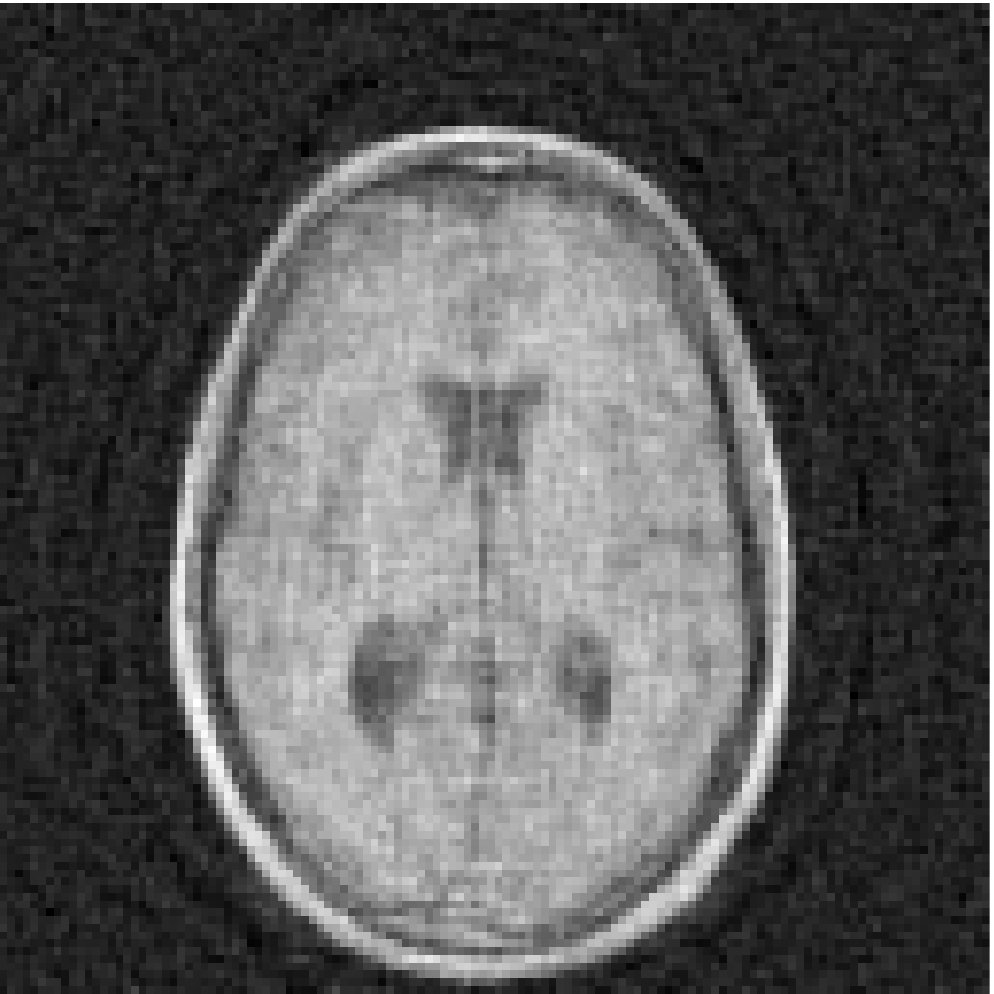}&
				\includegraphics[width=0.25\textwidth]{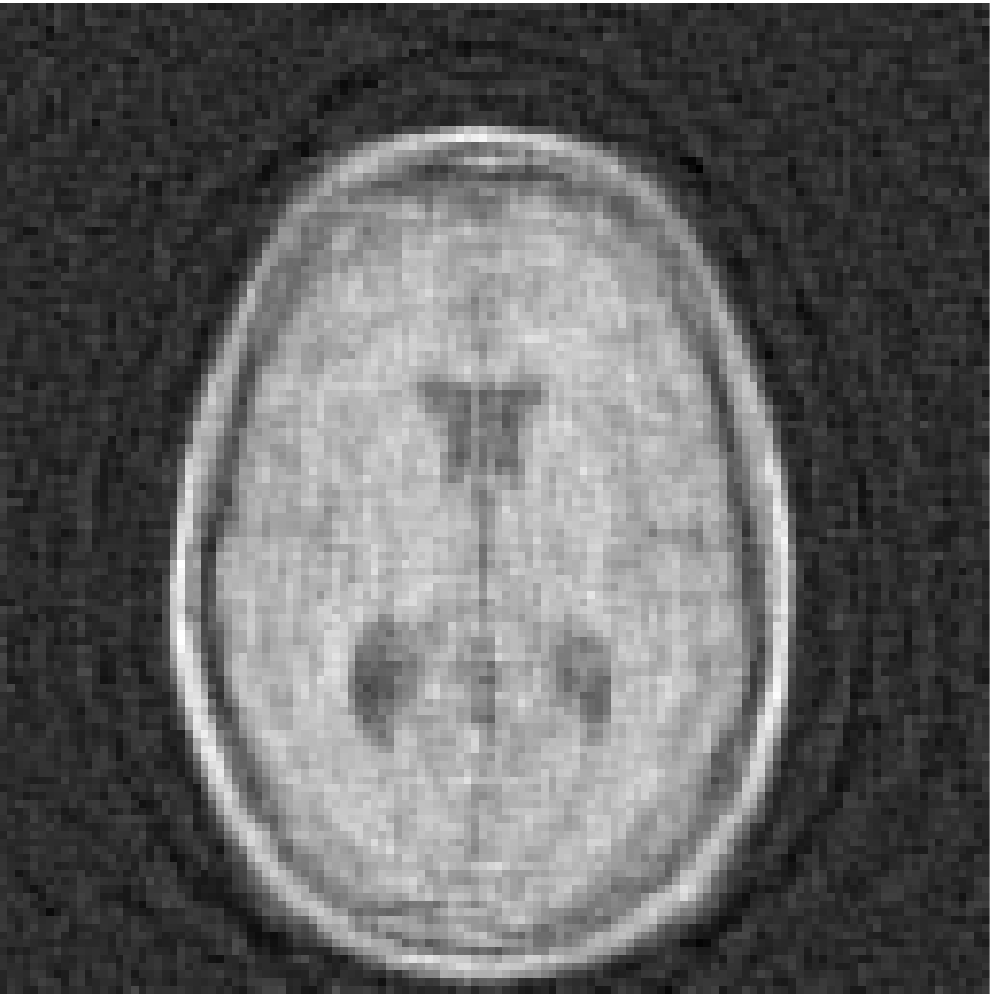}&
		\includegraphics[width=0.25\textwidth]{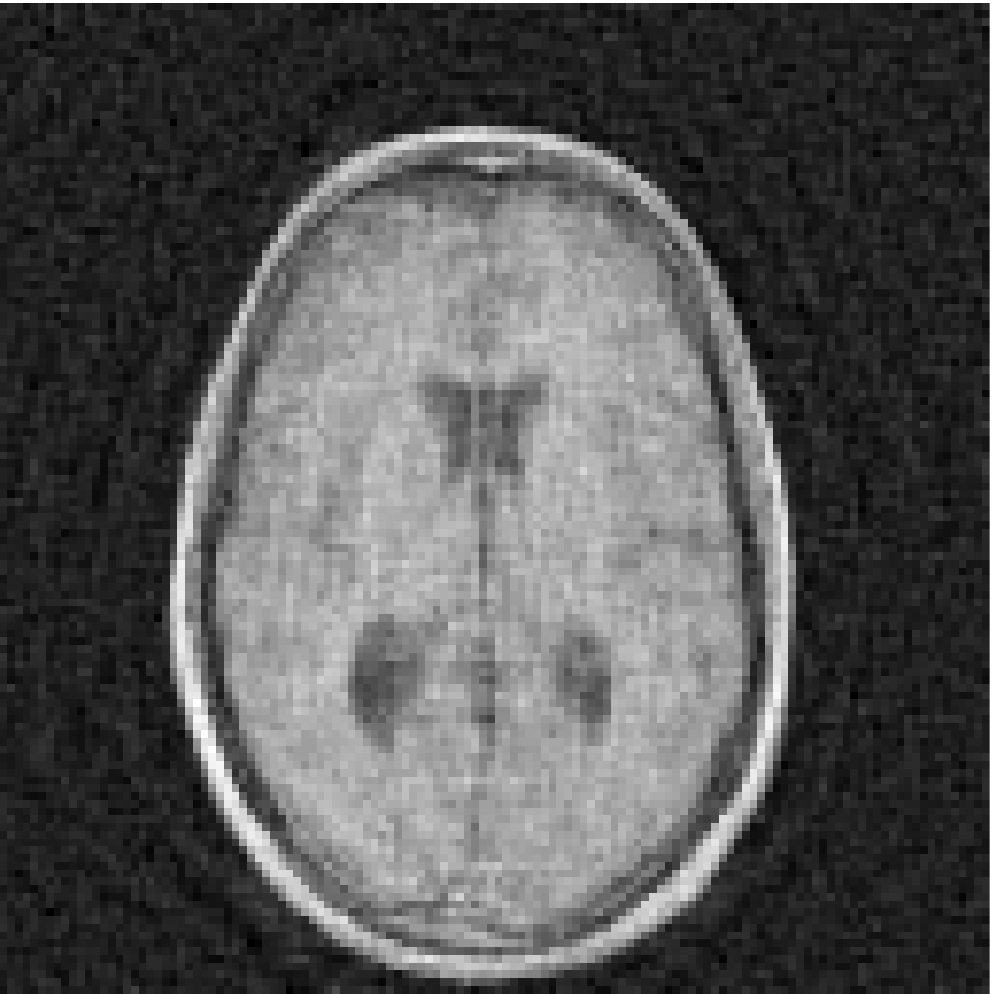}\\
		(c) $\bfxi_{(1)}$, $\rel = 0.1938$ & (d) $\bfxi_{(2)}$, $\rel = 0.2179$  &(e) $\bfxi_{(3)}$, $\rel = 0.1904$ 
\end{tabular}
	\end{center}
	\caption{Initial Tikhonov reconstruction is provided in (a).  The mean image, $\bfmu$, provided in (b), was taken to be the average of images slices 8-22 of the MRI image stack (omitting slice 15, the image of interest).  Image reconstructions in (c)-(e) correspond to ORIM updates to the initial Tikhonov reconstruction, for the various choices for $\bfM$ provided in~\eqref{eqn:Mmatrices}.  Relative reconstruction errors are provided.}
		\label{fig:recon}
\end{figure} 
Furthermore, absolute error images (in inverted colormap so that black corresponds to larger reconstruction error) in Figure~\ref{fig:errimages} show that the errors for the ORIM updated solution $\bfxi_{(3)}$ have smaller and more localized errors than the initial Tikhonov reconstruction.
\begin{figure}[bthp!]
	\begin{center}
		\begin{tabular}{cc}
		\includegraphics[width=.35\textwidth]{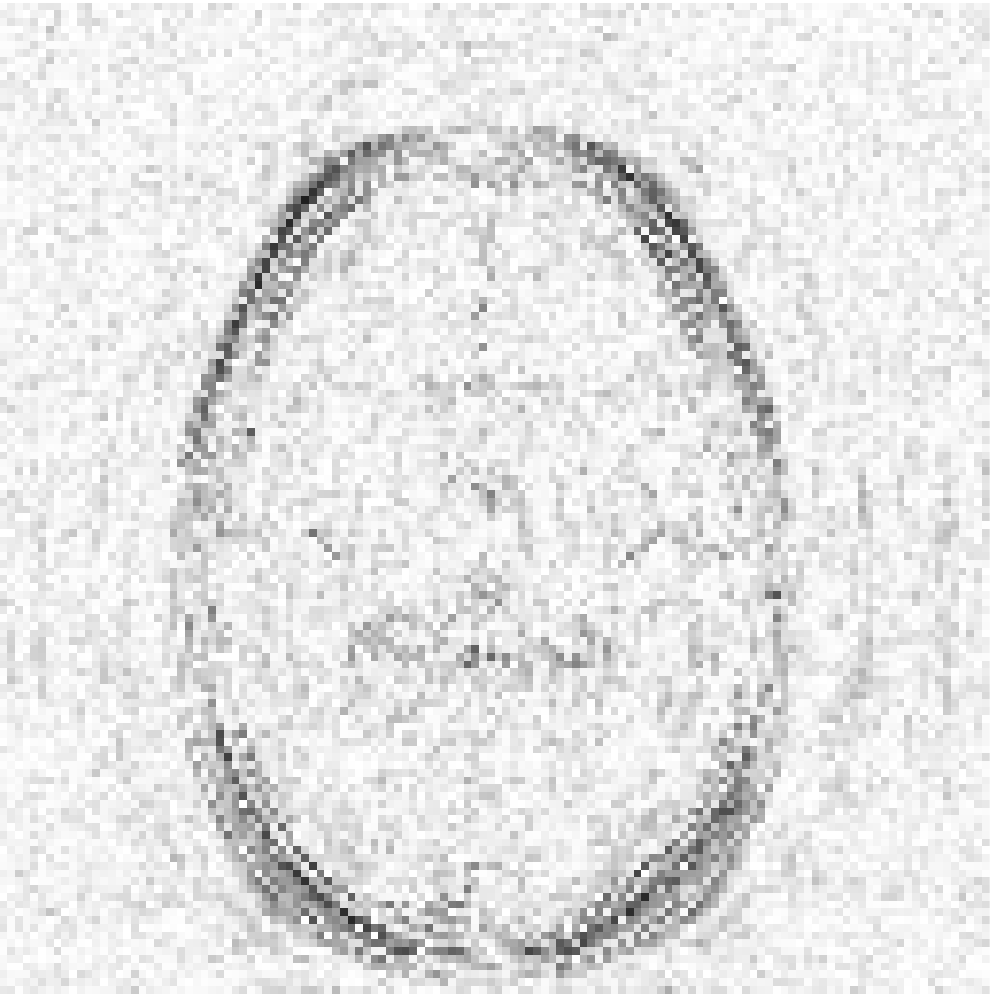} &
		\includegraphics[width=.35\textwidth]{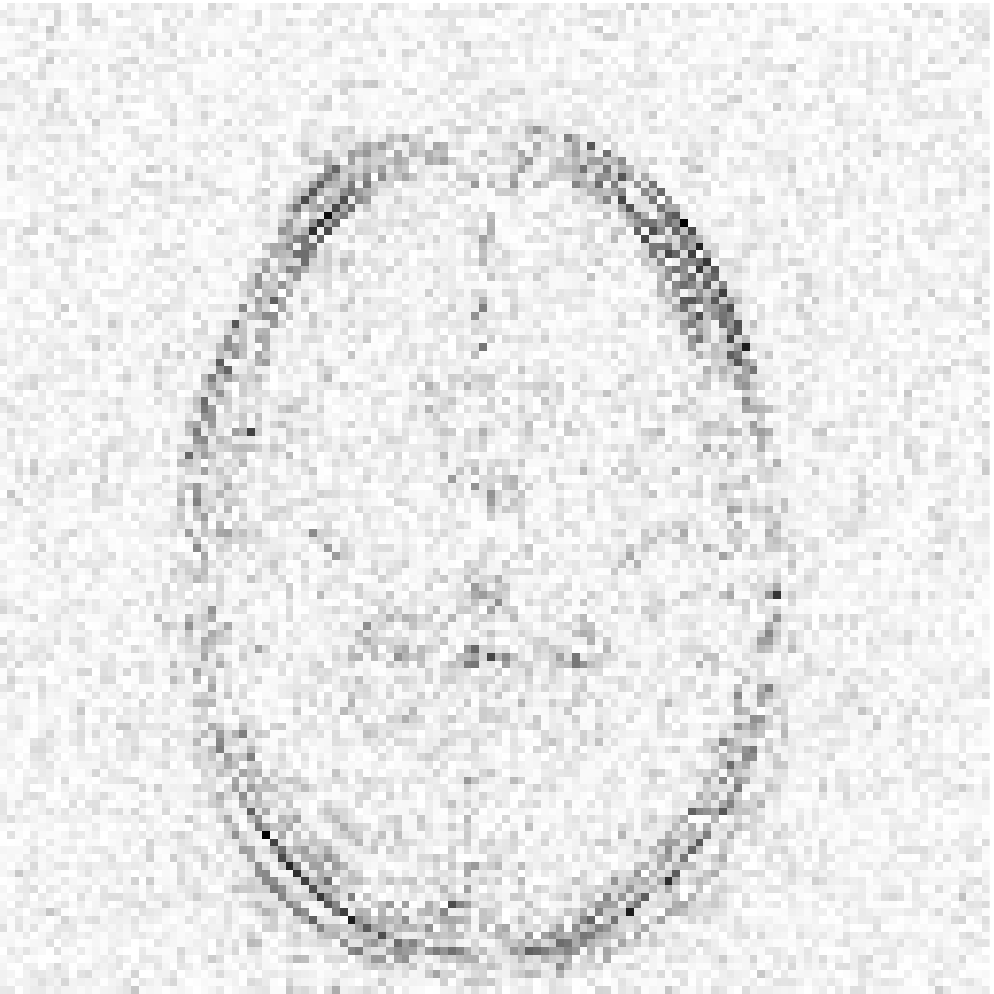}\\
		(a) Tikhonov & (b) $\bfxi_3$
		\end{tabular}
	\end{center}
	\caption{Error images (in inverted colormap where white corresponds to $0$) for the initial Tikhonov reconstruction and the ORIM updated solution $\bfxi_{(3)}$ which corresponds to $\bfM_{(3)}$ (i.e., nonzero mean and covariance matrix for $\bfxi$).}
		\label{fig:errimages}
\end{figure}

We repeated this experiment $20,\!000$ times, each time with a different noise realization in $\bfb$ and provide the distribution of the corresponding relative reconstruction errors in Figure~\ref{fig:hist}. Additionally, for each of these approaches, we provide the average reconstruction error, along with the standard deviation over all noise realizations in Table~\ref{tab:recon}.
It is evident from these experiments that ORIM rank-updates to the Tikhonov reconstruction matrix can lead to reconstructions with smaller relative errors and allows users to easily incorporate prior knowledge regarding the distributions of $\bfxi$ and $\bfdelta$. 
\begin{figure}[bthp!]
	\begin{center}
		\includegraphics[width=\textwidth]{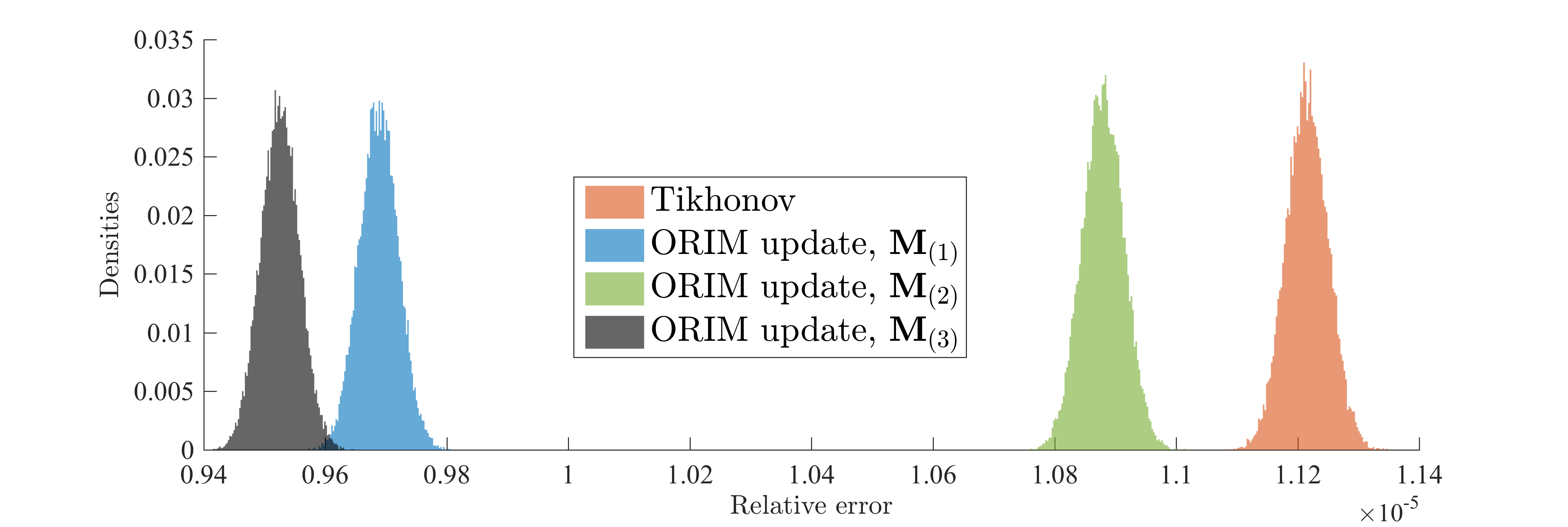}
	\end{center}
	\caption{Distributions of relative reconstruction errors}
		\label{fig:hist}
\end{figure}
\begin{table}[bthp]
	\label{tab:recon}
\caption{Comparison of average relative reconstruction error and standard deviation for $1,\!000$ noise realizations.}
	\begin{center}
\begin{tabular}{|l|c|}
	\hline
& mean \footnotesize $\pm$ standard deviation\\	\hline
 Tikhonov                  &  $1.1215\cdot 10^{-5}$ \ \footnotesize $\pm \, 3.4665\cdot 10^{-8}$ \\	\hline
 ORIM update, $\bfM_{(1)}$ &  $9.6881\cdot 10^{-6}$ \ \footnotesize $\pm \, 3.2040\cdot 10^{-8}$ \\	\hline
 ORIM update, $\bfM_{(2)}$ &  $1.0880\cdot 10^{-5}$ \ \footnotesize $\pm \, 3.4402\cdot 10^{-8}$ \\	\hline
 ORIM update, $\bfM_{(3)}$ &  $9.5254\cdot 10^{-6}$ \ \footnotesize $\pm \, 3.1541\cdot 10^{-8}$ \\	\hline
\end{tabular}
\end{center}
\end{table}
   
We then applied our reconstruction matrices, $\bfP + \widehat \bfZ_{(j)},$ to the other images in the MRI stack and provide the relative reconstruction errors in Figure~\ref{fig:mrislices}.  We observe that in general, all of the reconstruction matrices provide fairly good reconstructions, with smaller relative errors corresponding to images that are most similar to the mean image.  Some of the true images were indeed included in the mean image.  Regardless, our goal here is to illustrate that ORIM update matrices can be effective and efficient, if a good mean image and/or covariance matrix are provided. Other covariance matrices can be easily incorporated in this framework, but comparisons are beyond the scope of this work.
 
\begin{figure}[bthp!]
	\begin{center}
		\includegraphics[width=\textwidth]{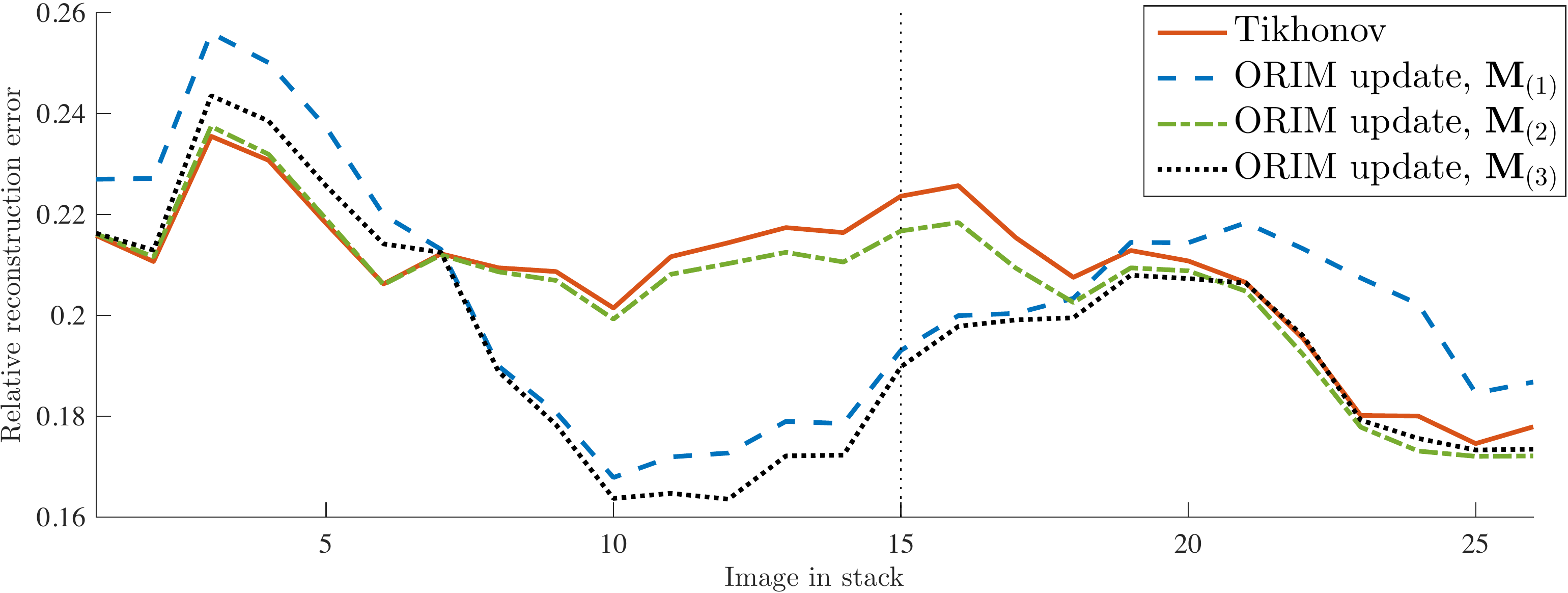}
	\end{center}
	\caption{Reconstructions of different slices from the MRI image stack using the initial Tikhonov reconstruction matrix, as well as the ORIM-updated reconstruction matrices.}
		\label{fig:mrislices}
\end{figure}
% subsection updating_inverse_matrices (end)

 \subsection{Experiment 3: ORIM updates for perturbed problems} \label{sub:experiment3} 
Last, we consider an example where ORIM updates to existing regularized inverse matrices can be used to efficiently solve perturbed problems.  That is, consider a linear inverse problem such as~\eqref{eqn:linearsystem} where a good regularized inverse matrix denoted by $\bfP$ can be obtained.  Now, suppose $\bfA$ is modified slightly (e.g., due to equipment setup or a change in model parameters), and a perturbed linear inverse problem
\begin{equation}
	\label{eqn:perturbed}
	\widetilde \bfb = \widetilde \bfA \bfxi + \widetilde \bfdelta
\end{equation}
must be solved. We will show that as long as the perturbation is not too large, a good solution to the perturbed problem can be obtained using low-rank ORIM updates to $\bfP$.  This is similar to the scenario described in Experiment 1, but here we use an example from 2D tomographic imaging, where the goal is to estimate an image or object $f(x,y)$, given measured projection data.  The Radon transform can be used to model the forward process, where the Radon transform of $f(x,y)$ is given by
\begin{equation}
	\label{eq:Radon}
	b(\xi,\phi) = \int f(x,y) \delta(x \cos\phi + y \sin\phi - \xi )\,\d x \,\d y
\end{equation}
where $\delta$ is the Dirac delta function.  Figure \ref{fig:Radon} illustrates the basic tomographic process.
\begin{figure}[bthp!]
	\begin{center}
		\includegraphics[width=.8\textwidth]{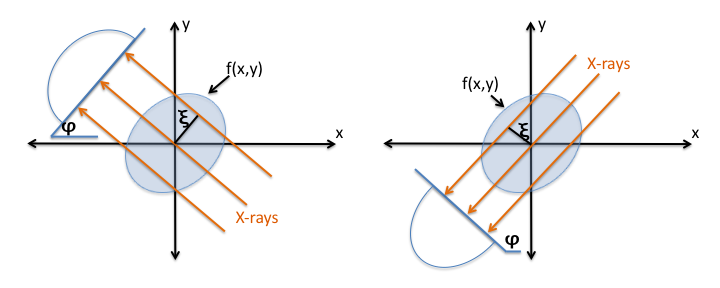}
	\end{center}
	\caption{Experiment 3: Illustration of 2D tomography problem setup, where $f(x,y)$ is the desired object and projection data is obtained by x-ray transmission at various angles around the object.}
		\label{fig:Radon}
\end{figure}

The goal of the inverse problem is to compute a (discretized) reconstruction of the image $f(x,y)$, given projection data that is collected at various angles around the object.  The projection data, when stored as an image, gives the sinogram.  In Figure~\ref{fig:tomo} (a), we provide the true image which is a $128 \times 128$ image of the Shepp-Logan phantom, and two sinograms are provided in Figure~\ref{fig:tomo} (b) and (c), where the rows of the image contain projection data at various angles. In particular, for this example, we take $60$ projection images at $3$ degree intervals from $0$ to $177$ degrees (i.e., the sinogram contains $60$ rows).  In order to deal with boundary artifacts, we pad the original image with zeros. 
\begin{figure}[bthp!]
	\begin{center}
		\begin{tabular}{ccc}
\includegraphics[width=.2\textwidth]{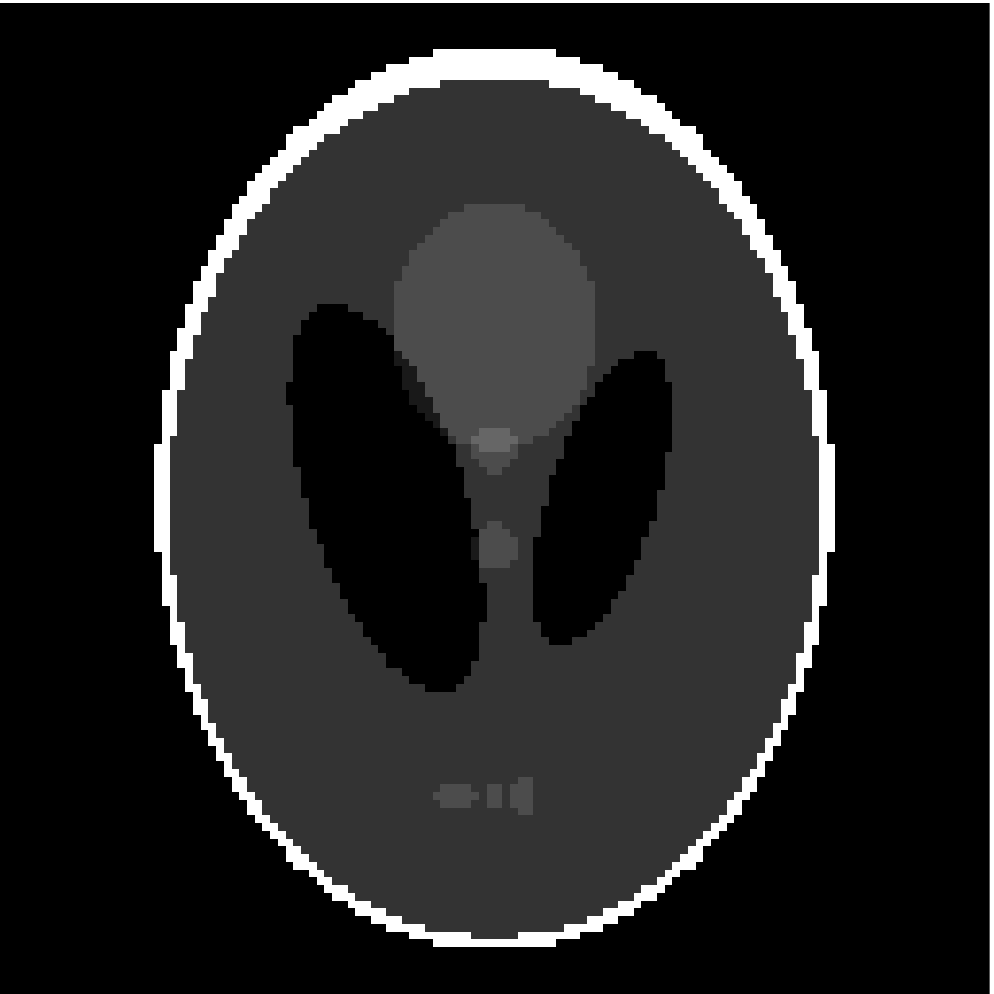} &
 \includegraphics[width=.35\textwidth]{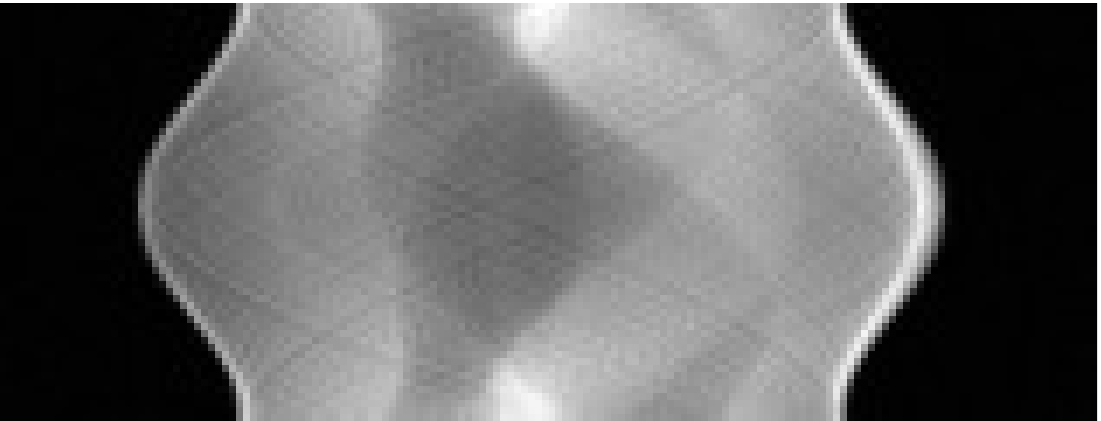}  &	
 \includegraphics[width=.35\textwidth]{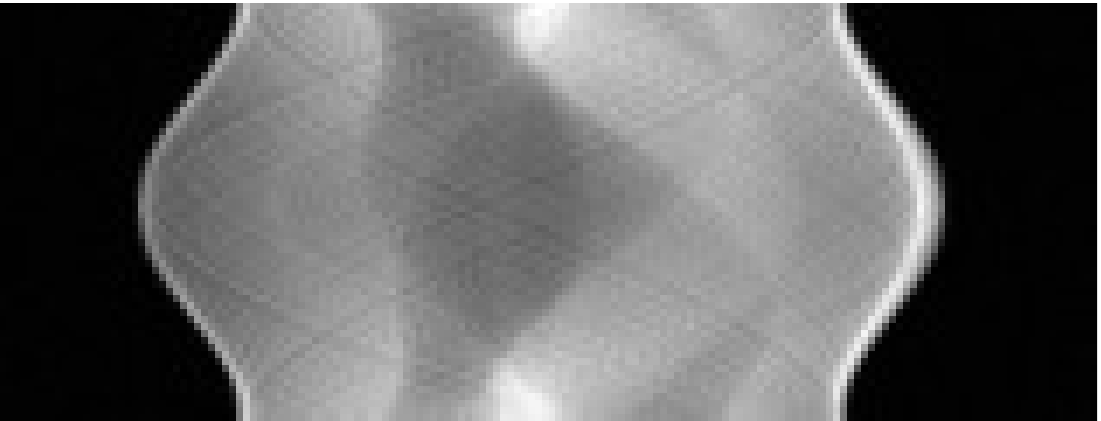}\\
  (a) True image & (b) Sinogram 1 & (c) Sinogram 2
		\end{tabular}
	\end{center}
	\caption{Tomography Problem. The true image is shown in (a), the observed sinogram for the initial problem is given in (b) and the sinogram corresponding to the perturbed problem is given in (c).}
		\label{fig:tomo}
\end{figure}

The discrete tomographic reconstruction problem can be modeled as~\eqref{eqn:linearsystem} where $\bfxi$ represents the (vectorized) desired image, $\bfA$ models the tomographic process, and $\bfb$ is the (vectorized) observed sinogram. For this example, we construct $$\bfA = \begin{bmatrix}\bfR \bfS_{(1)} \\ \vdots\\\bfR \bfS_{(60)} 
\end{bmatrix},$$ where $\bfS_{(j)}$ is a sparse matrix that represents rotation of the image for the $j$-th angle, whose entries were computed using bilinear interpolation as described in \cite{ChHaNa06,Chung2010b}, and $\bfR$ is a Kronecker product that approximates the integration operation.  It is worth mentioning that in typical tomography problems, $\bfA$ is never created, but rather accessed via projection and backprojection operations \cite{feeman2015mathematics}.  Our methods also work for scenarios where $\bfA$ represents a function call or object, but our current approach allows us to to build the sparse matrix directly.  White noise is added to the problem at relative noise level $0.005$.

Since $\bfA$ has no obvious structure to exploit, we use iterative reconstruction methods to get an initial reconstruction matrix.  This mimics a growing trend in tomography where reconstruction methods have shifted from filtered back projection approaches to iterative reconstruction methods \cite{hsieh2009computed,beister2012iterative}.  Furthermore, these iterative approaches are ideal for problems such as limited angle tomogography or tomosynthesis, where the goal is to obtain high quality images while reducing the amount of radiation to the patient \cite{dobbins2003digital,Chung2010}.
In this paper, we define a regularized inverse matrix $\bfP$ in terms of a partial Golub-Kahan bidiagonalization.  That is, given a matrix $\bfA$ and vector $\bfb,$ the Golub-Kahan process iteratively transforms matrix $[\bfb\,\,\, \bfA]$ to upper-bidiagonal form $[\beta_1 \bfe_1\,\,\, \bfB^{(k)}]$, with initializations $\beta_1 = \norm[2]{\bfb}$, $\bfw_1 = \bfb / \beta_1$ and $\alpha_1 \bfq_1 = \bfA\t \bfw_1$.  After $k$ steps of the Golub-Kahan bidiagonalization process, we have matrices $\bfQ^{(k)} = \begin{bmatrix}
	\bfq_1 & \ldots& \bfq_k
\end{bmatrix} \in \bbR^{n\times k}$,  $\bfW^{(k)} = \begin{bmatrix}
	\bfw_1 & \ldots& \bfw_k
\end{bmatrix} \in \bbR^{m\times k}$, and bidiagonal matrix
\begin{equation*}
 	\bfB^{(k)} = \begin{bmatrix} \alpha_1 & & &  \\
 	\beta_2 & \alpha_2 & &  \\
	 & \ddots & \ddots &  \\
	& & \beta_k & \alpha_k \\
	&	& & \beta_{k+1}\\
 	\end{bmatrix} \in \bbR^{(k+1)\times k},
 \end{equation*}
such that
\begin{equation}
	\bfA \bfQ^{(k)} = \bfW^{(k+1)} \bfB^{(k)}.
\end{equation}
It is worth noting that in exact arithmetic, the $k$-th LSQR \cite{PaSa82a,PaSa82b} iterate is given by $\bfx_{\rm LSQR} = \bfQ^{(k)} (\bfB^{(k)})^{\dagger} (\bfW^{(k+1)})\t \bfb$.  Thus, we define $\bfP = \bfQ^{(k)} (\bfB^{(k)})^{\dagger} (\bfW^{(k+1)})\t $ to be a regularized inverse matrix for the original problem, where $k=46$ corresponds to minimal reconstruction error $\norm[2]{\bfx_{\rm LSQR} - \bfx_{\rm true}}/ \norm[2]{\bfx_{\rm true}} = 0.2641$ for the original problem.  See Figure~\ref{fig:relerr} for the relative error plot for the original problem.  
\begin{figure}[bthp!]
	\begin{center}
				\includegraphics[width=\textwidth]{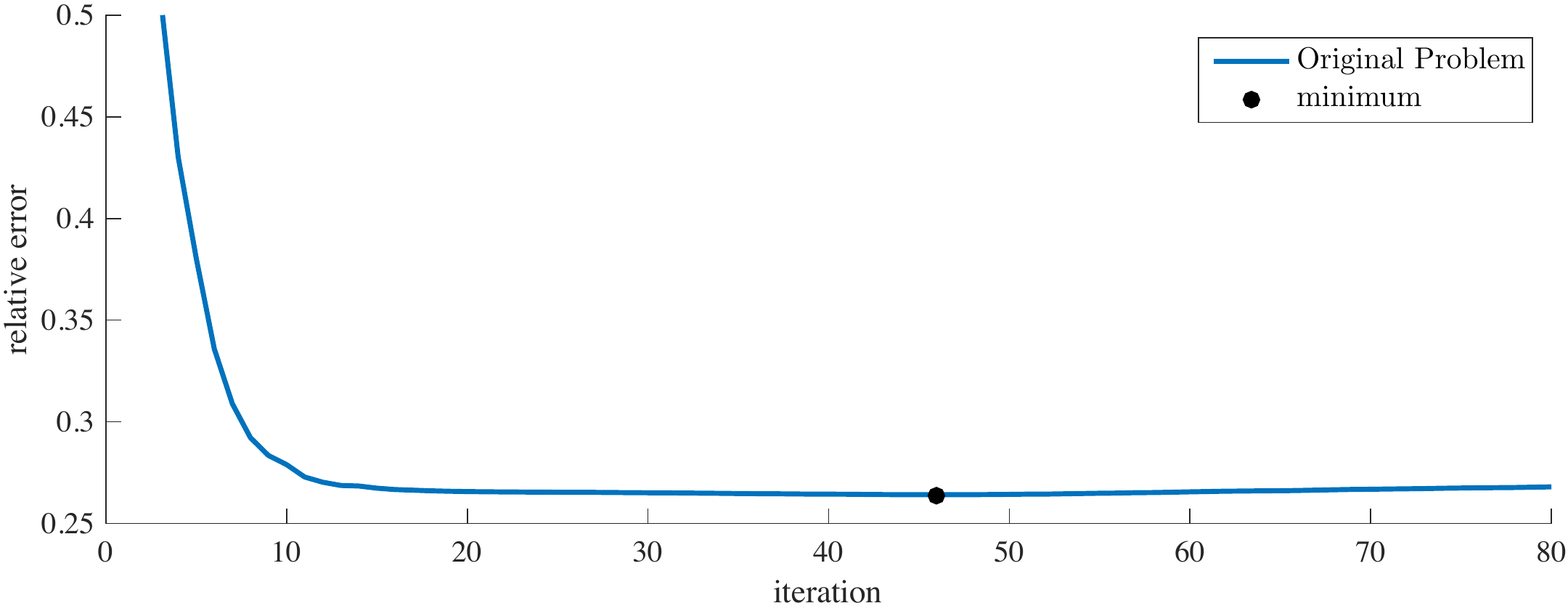}
	\end{center}
	\caption{Relative reconstruction errors for LSQR on the original tomography problem, where the bullet $\bullet$ corresponds to minimal reconstruction error.}
		\label{fig:relerr}
\end{figure}

The goal of this illustration is to show that a low-rank ORIM update to $\bfP$ can be used to solve a perturbed problem.  Thus, we created a perturbed problem~\eqref{eqn:perturbed}, where $\widetilde \bfb$ and $\widetilde\bfA$ were created with slightly shifted projection angles.   Again, we take $60$ projection images at $3$ degree intervals, but this time the angles ranged from $1$ to $178$ degrees.  The corresponding sinogram is given in Figure~\ref{fig:tomo}(c).  A first approach would be to use $\bfP$ to reconstruct the perturbed data: $\bfP \widetilde \bfb$.  This reconstruction is provided in the top left corner of Figure~\ref{fig:tomoresults}, and it is evident that this is not a very good reconstruction.  After a rank-4 update to $\bfP$, where $\bfmu_\bfxi = \bfzero_{n \times 1}$, $\bfM_\bfxi = \bfI_n$ and $\eta = 0.08$, we get a significantly better reconstruction (middle column of Figure~\ref{fig:tomoresults}).  For comparison purposes, we provide in the last column the best LSQR reconstruction for the perturbed problem (i.e., corresponding to minimal reconstruction error).
Relative reconstruction errors are provided, and corresponding absolute error images are presented on the same scale and with inverted colormap.
 
 \begin{figure}[bthp!]
 	\begin{center} 
		Initial, ${\rm rel} = 1.438$ \hspace{11ex} ORIM, ${\rm rel} = 0.287$ \hspace{11ex} LSQR, ${\rm rel} = 0.267$\\
 		\includegraphics[width=1\textwidth]{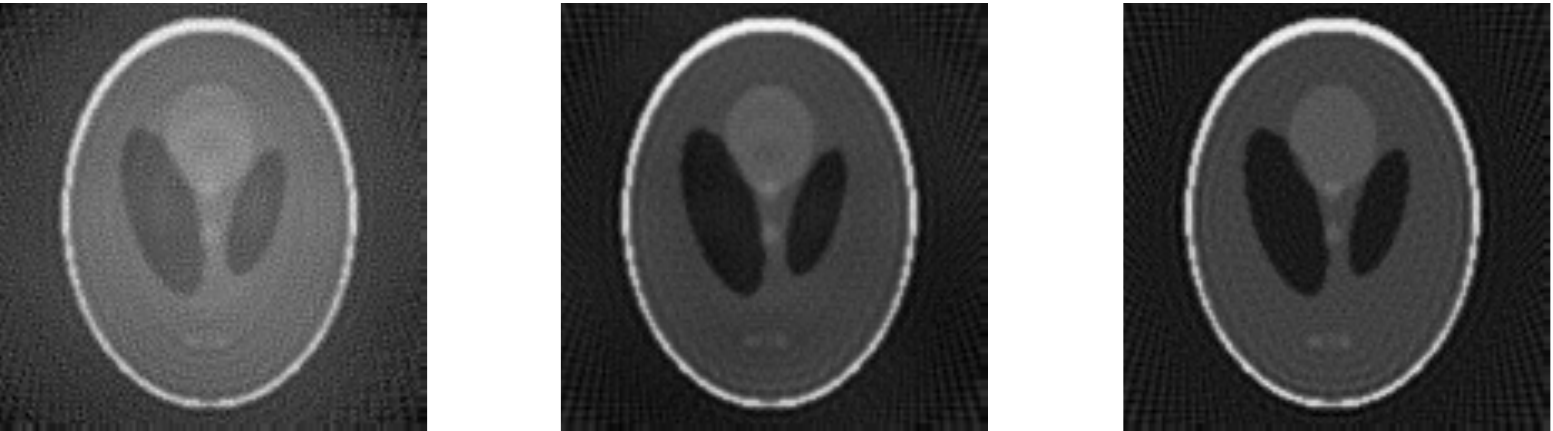} \\
 		\includegraphics[width=1\textwidth]{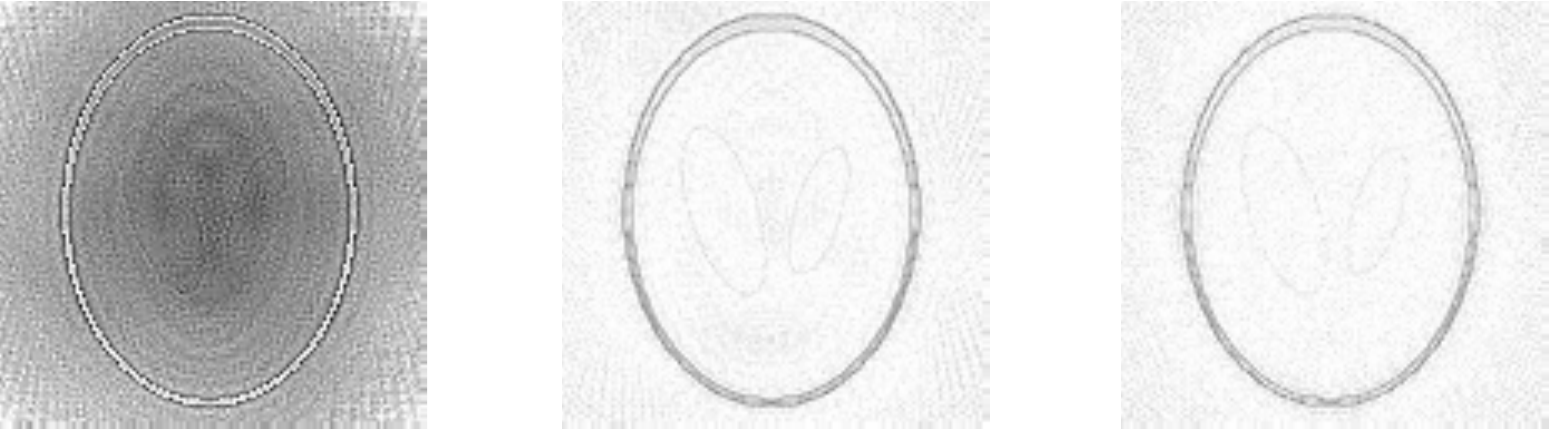}
 	\end{center}
 	\caption{Tomographic reconstructions for the perturbed problem, with corresponding error images.  The reconstruction in the first column was obtained as $\bfP \widetilde \bfb$, the reconstruction in the second column was obtained using a rank-4 ORIM update to P and was computed as $(\bfP+ \widehat \bfZ) \widetilde \bfb$.  The reconstruction in the last column corresponds to the LSQR reconstruction for the perturbed problem corresponding to minimal reconstruction error.}
 		\label{fig:tomoresults}
 \end{figure}

 \section{Conclusions} % (fold)
 \label{sec:conclusions}

 In this paper, we provide an explicit solution for a generalized rank-constrained matrix inverse approximation problem.   We define the solution to be an optimal regularized inverse matrix (ORIM), where we include regularization terms, rank constraints, and a more general weighting matrix. Two main distinctions from previous results are that we can include updates to an existing matrix inverse approximation, and in the Bayes risk minimization framework, we can incorporate additional information regarding the probability distribution of $\bfxi.$  For large scale problems, obtaining an ORIM according to Theorem~\ref{thm:mainresult} can be computationally prohibitive, so we described an efficient rank-update approach that decomposes the optimization problem into smaller rank subproblems and uses gradient-based methods that can exploit linearity.  Using examples from image processing, we showed that ORIM updates can be used to compute more accurate solutions to inverse problems and can be used to efficiently solve perturbed systems, which opens the door to new applications and investigations.  In particular, our current research is on incorporating ORIM updates within nonlinear optimization schemes such as variable projection methods, as well as on investigating its use for updating preconditioners for slightly changing systems.

\bibliographystyle{siamplain}
\bibliography{orim}

\begin{thebibliography}{10}

\bibitem{avron2011randomized}
{\sc H.~Avron and S.~Toledo}, {\em Randomized algorithms for estimating the
  trace of an implicit symmetric positive semi-definite matrix}, Journal of the
  ACM (JACM), 58 (2011), pp.~8:1--8:17.

\bibitem{beister2012iterative}
{\sc M.~Beister, D.~Kolditz, and W.~A. Kalender}, {\em Iterative reconstruction
  methods in x-ray ct}, Physica medica, 28 (2012), pp.~94--108.

\bibitem{Benzi2002}
{\sc M.~Benzi}, {\em Preconditioning techniques for large linear systems: a
  survey}, Journal of Computational Physics, 182 (2002), pp.~418--477.

\bibitem{Carlin2000}
{\sc B.~Carlin and T.~Louis}, {\em Bayes and Empirical Bayes Methods for Data
  Analysis}, Chapman and Hall/CRC, Boca Raton, 2~ed., 2000.

\bibitem{Chen2005}
{\sc K.~Chen}, {\em Matrix Preconditioning Techniques and Applications},
  vol.~19, Cambridge University Press, Cambridge, 2005.

\bibitem{Chung2013a}
{\sc J.~Chung and M.~Chung}, {\em Computing optimal low-rank matrix
  approximations for image processing}, in IEEE Proceedings of the Asilomar
  Conference on Signals, Systems, and Computers. November 3-6, 2013, Pacific
  Grove, CA, USA, 2013.

\bibitem{chung2014efficient}
{\sc J.~Chung and M.~Chung}, {\em An efficient approach for computing optimal
  low-rank regularized inverse matrices}, Inverse Problems, 30 (2014),
  pp.~1--19.

\bibitem{Chung2011}
{\sc J.~Chung, M.~Chung, and D.~O'Leary}, {\em Designing optimal filters for
  ill-posed inverse problems}, SIAM Journal on Scientific Computing, 33 (2011),
  pp.~3132--3152.

\bibitem{Chung2015}
{\sc J.~Chung, M.~Chung, and D.~P. O'Leary}, {\em Optimal regularized low rank
  inverse approximation}, Linear Algebra and its Applications, 468 (2015),
  pp.~260--269.

\bibitem{ChHaNa06}
{\sc J.~Chung, E.~Haber, and J.~G. Nagy}, {\em Numerical methods for coupled
  super-resolution}, Inverse Problems, 22 (2006), pp.~1261--1272.

\bibitem{Chung2010b}
{\sc J.~Chung and J.~Nagy}, {\em An efficient iterative approach for
  large-scale separable nonlinear inverse problems}, SIAM Journal on Scientific
  Computing, 31 (2010), pp.~4654--4674.

\bibitem{Chung2010}
{\sc J.~Chung, J.~Nagy, and I.~Sechopoulos}, {\em Numerical algorithms for
  polyenergetic digital breast tomosynthesis reconstruction}, SIAM Journal on
  Imaging Sciences, 3 (2010), pp.~133--152.

\bibitem{dobbins2003digital}
{\sc J.~T. Dobbins~III and D.~J. Godfrey}, {\em Digital x-ray tomosynthesis:
  current state of the art and clinical potential}, Physics in medicine and
  biology, 48 (2003), p.~R65.

\bibitem{Drineas2007}
{\sc P.~Drineas, R.~Kannan, and M.~Mahoney}, {\em Fast {M}onte {C}arlo
  algorithms for matrices {II}: Computing a low-rank approximation to a
  matrix}, SIAM Journal on Computing, 36 (2007), pp.~158--183.

\bibitem{feeman2015mathematics}
{\sc T.~G. Feeman}, {\em Mathematics of Medical Imaging}, Springer, 2015.

\bibitem{Friedland2007}
{\sc S.~Friedland and A.~Torokhti}, {\em Generalized rank-constrained matrix
  approximations}, SIAM Journal on Matrix Analysis and Applications, 29 (2007),
  pp.~656--659.

\bibitem{Gill1981}
{\sc P.~Gill, W.~Murray, and M.~Wright}, {\em Practical Optimization}, Emerald
  Group Publishing, Bingley, UK, 1981.

\bibitem{GoPe73}
{\sc G.~Golub and V.~Pereyra}, {\em The differentiation of pseudo-inverses and
  nonlinear least squares whose variables separate}, SIAM~J.~Numer.~Anal., 10
  (1973), pp.~413--432.

\bibitem{Hadamard1923}
{\sc J.~Hadamard}, {\em Lectures on Cauchy's Problem in Linear Differential
  Equations}, Yale University Press, New Haven, 1923.

\bibitem{Hank95a}
{\sc M.~Hanke}, {\em Conjugate Gradient Type Methods for Ill-Posed Problems},
  Pitman Research Notes in Mathematics, Longman Scientific \& Technical,
  Harlow, Essex, 1995.

\bibitem{HaHa93}
{\sc M.~Hanke and P.~Hansen}, {\em Regularization methods for large-scale
  problems}, Surveys on Mathematics for Industry, 3 (1993), pp.~253--315.

\bibitem{Hansen1994}
{\sc P.~Hansen}, {\em Regularization tools: A MATLAB~package for analysis and
  solution of discrete ill-posed problems}, Numerical Algorithms, 6 (1994),
  pp.~1--35.

\bibitem{Hansen2010}
{\sc P.~Hansen}, {\em Discrete Inverse Problems: Insight and Algorithms}, SIAM,
  Philadelphia, 2010.

\bibitem{Hansen2006}
{\sc P.~Hansen, J.~Nagy, and D.~O'Leary}, {\em Deblurring Images: Matrices,
  Spectra and Filtering}, SIAM, Philadelphia, 2006.

\bibitem{hsieh2009computed}
{\sc J.~Hsieh}, {\em Computed tomography: principles, design, artifacts, and
  recent advances}, SPIE Bellingham, WA, 2009.

\bibitem{Markovsky2012}
{\sc I.~Markovsky}, {\em Low Rank Approximation: Algorithms, Implementation,
  Applications}, Springer, New York, 2012.

\bibitem{Nagy2004}
{\sc J.~Nagy, K.~Palmer, and L.~Perrone}, {\em Iterative methods for image
  deblurring: A {M}atlab object oriented approach}, Numerical Algorithms, 36
  (2004), pp.~73--93.

\bibitem{Nocedal1999}
{\sc J.~Nocedal and S.~Wright}, {\em Numerical Optimization}, Springer, New
  York, 1999.

\bibitem{PaSa82a}
{\sc C.~Paige and M.~Saunders}, {\em {LSQR}: {A}n algorithm for sparse linear
  equations and sparse least squares}, ACM Transactions on Mathematical
  Software, 8 (1982), pp.~43--71.

\bibitem{PaSa82b}
{\sc C.~C. Paige and M.~A. Saunders}, {\em {{\def\LSQRb{}}Algorithm 583},
  {LSQR:} {S}parse linear equations and least-squares problems}, ACM Trans.
  Math. Soft., 8 (1982), pp.~195--209.

\bibitem{RuOsFa92}
{\sc L.~Rudin, S.~Osher, and E.~Fatemi}, {\em Nonlinear total variation based
  noise removal algorithms}, Physica~{D}, 60 (1992), pp.~259--268.

\bibitem{saibaba2012application}
{\sc A.~K. Saibaba, S.~Ambikasaran, J.~Yue~Li, P.~K. Kitanidis, and E.~F.
  Darve}, {\em Application of hierarchical matrices to linear inverse problems
  in geostatistics}, Oil and Gas Science and Technology-Revue de l'IFP-Institut
  Francais du Petrole, 67 (2012), p.~857.

\bibitem{Seber2012}
{\sc G.~A.~F. Seber and A.~J. Lee}, {\em Linear Regression Analysis}, vol.~936,
  John Wiley \& Sons, San Francisco, 2012.

\bibitem{Stewart2001}
{\sc G.~W. Stewart}, {\em Matrix Algorithms: Volume 2. Eigensystems}, vol.~2,
  SIAM, Philadelphia, 2001.

\bibitem{tikhonov1977solutions}
{\sc A.~Tikhonov and V.~Arsenin}, {\em Solutions of Ill-posed Problems},
  Winston, 1977.

\bibitem{Vapnik1998}
{\sc V.~Vapnik}, {\em Statistical Learning Theory}, Wiley, San Francisco, 1998.

\bibitem{Vogel1987}
{\sc C.~R. Vogel}, {\em {Computational Methods for Inverse Problems (Frontiers
  in Applied Mathematics)}}, SIAM, Philadelphia, 1987.

\bibitem{Ye2005}
{\sc J.~Ye}, {\em Generalized low rank approximations of matrices}, Machine
  Learning, 61 (2005), pp.~167--191.

\end{thebibliography}
\end{document}